%

\documentclass[11pt,a4paper]{article}
\usepackage{amsmath}
\usepackage{amssymb}
\usepackage{amsthm}
\usepackage{amsfonts}
\usepackage{enumerate}
\usepackage{pstricks}
\usepackage{pst-plot}
\usepackage{pst-poly}
\usepackage{url}
\usepackage{booktabs}
\usepackage{verbatim}
\usepackage{hyperref}
\usepackage{breakurl,url}
\usepackage[margin=1.3in]{geometry}
\frenchspacing

\addtolength{\parskip}{0.5ex}

%
\def\tluste#1{\protect{\textrm{\boldmath $#1$}}}

\newcommand{\mace}[1]{{{#1}}}
\newcommand{\mna}[1]{{\mathcal{#1}}}

\newcommand{\omace}[1]{\mbox{$\overline{\mace{#1}}$}} 
\newcommand{\umace}[1]{\mbox{$\underline{\mace{#1}}$}} 
\newcommand{\imace}[1]{\mbox{$\tluste{#1}$}}

\def\Mid#1{#1^c}
\def\Rad#1{#1^\Delta}
\def\comp#1{{\langle{#1}\rangle}}

\newcommand{\onum}[1]{\mbox{$\overline{{#1}}$}} 
\newcommand{\unum}[1]{\mbox{$\underline{{#1}}$}}

 
\newcommand{\inum}[1]{\mbox{$\tluste{#1}$}} 
\newcommand{\R}[0]{{\mathbb{R}}}

\newcommand{\IR}[0]{{\mathbb{IR}}}

\def\eps{{\varepsilon}}
\newcommand{\mmid}[0]{;\,}		

\newcommand{\seznam}[1]{{\{1, \ldots, {#1}\}}}

\def\clqq{``}
\def\crqq{''}
\def\quo#1{\clqq{}#1\crqq{}}  

\newcommand{\st}[0]{{\ \ \mbox{subject to}\ \ }}
\DeclareMathOperator{\sgn}{sgn}	


\def\nref#1{\mbox{(\ref{#1})}}


\newtheorem{proposition}{Proposition}
\newtheorem{lemma}{Lemma}
\newtheorem{corollary}{Corollary}

\theoremstyle{definition}

\newtheorem{example}{Example}


\begin{document}

\title{Stability of the linear complementarity problem properties under interval uncertainty}

\author{
  Milan Hlad\'{i}k\footnote{
Charles University, Faculty  of  Mathematics  and  Physics,
Department of Applied Mathematics, 
Malostransk\'e n\'am.~25, 11800, Prague, Czech Republic, 
e-mail: \texttt{milan.hladik@matfyz.cz}
}}

\date{\today}
\maketitle

\begin{abstract}
We consider the linear complementarity problem with uncertain data modeled by intervals, representing the range of possible values. 
Many properties of the linear complementarity problem (such as solvability, uniqueness, convexity, finite  number of solutions etc.) are reflected by the properties of the constraint matrix. In order that the problem has desired properties even in the uncertain environment, we have to be able to check them for all possible realizations of interval data. This leads us to the robust properties of interval matrices. In particular, we will discuss $S$-matrix, $Z$-matrix, copositivity, semimonotonicity, column sufficiency, principal nondegeneracy, $R_0$-matrix and $R$-matrix. We characterize the robust properties and also suggest efficiently recognizable subclasses.
\end{abstract}

\textbf{Keywords:}\textit{ linear complementarity, interval analysis, special matrices, NP-hardness.}

\section{Introduction}

\paragraph{Linear complementarity problem.}
The linear complementarity problem (LCP) appears in many optimization and operations research models such as quadratic programming, bimatrix games, or equilibria in specific economies. Its mathematical formulation reads
\begin{align}
\label{lcp1}
y&=Az+q,\ \ y,z\geq0,\\
\label{lcp2}
y^Tz&=0,
\end{align}
where $A\in\R^{n\times n}$ and $q\in\R^n$. Condition \nref{lcp1} is linear, but the (nonlinear) complementarity condition \nref{lcp2} makes the problem hard.
The LCP is called \emph{feasible} if \nref{lcp1} is feasible, and is called \emph{solvable} if \nref{lcp1}--\nref{lcp2} is feasible. Basic properties and algorithms for LCP are described, e.g., in the books \cite{CotPan2009,Mur1997}.

\paragraph{Interval uncertainty.}
Properties of the solution set of LCP relate with properties of matrix~$A$. In this paper, we study properties of $A$ when its entries are not precisely known, but we have interval ranges covering the exact values. Formally, an interval matrix is a set
$$
\imace{A}:=\{A\in\R^{m\times n}\mmid \umace{A}\leq A\leq \omace{A}\},
$$
where $ \umace{A},\omace{A}\in\R^{m\times n}$, $\umace{A}\leq\omace{A}$, are given matrices and the inequality is understood entrywise. 
The corresponding midpoint and radius matrices are defined as
$$
\Mid{A}:=\frac{1}{2}(\umace{A}+\omace{A}),\quad
\Rad{A}:=\frac{1}{2}(\omace{A}-\umace{A}).
$$

The LCP with interval uncertainties was addressed in \cite{AleScha2003,MaXu2009}, among others. They investigated the problem of enclosing the solution set of all possible realizations of interval data. Our goal is different, we focus on the interval matrix properties related to the LCP.

\paragraph{Problem statement.}
Throughout this paper we consider a class of the LCP problems with $A\in\imace{A}$, where $\imace{A}$ is a given interval matrix.
Let $\mna{P}$ be a matrix property. We say that $\mna{P}$ holds \emph{strongly} for $\imace{A}$ if it holds for each $A\in\imace{A}$.

Our aim is to characterize strong versions of several fundamental matrix classes appearing in the context of the LCP.  If property $\mna{P}$ holds strongly for an interval matrix $\imace{A}$, then we are sure that $\mna{P}$ is provably valid whatever are the true values of the uncertain entries. Therefore, the property holds in a robust sense for the LCP problem.

\paragraph{Notation.}
Given a matrix $A\in\R^{n\times n}$ and index sets $I,J\subseteq\{1,\dots,n\}$, $A_{I,J}$ denotes the restriction of $A$ to the rows indexed by $I$ and the columns indexed by~$J$. Similarly $x_I$ denotes the restriction of a vector $x$ to the entries indexed by~$I$. The identity matrix of size $n$ is denoted by $I_n$, and the spectral radius of a matrix $A$ by $\rho(A)$. The symbol $D_s$ stands for the diagonal matrix with entries $s_1,\dots,s_n$ and $e=(1,\dots,1)^T$ for the vector of ones. The relation $x\gneqq y$ between vectors $x,y$ is defined as $x\geq y$ and $x\not= y$.

\section{Particular matrix classes}

In the following sections, we consider important classes of matrices appearing in the context of the linear complementarity problem. We characterize their strong counterparts when entries are interval valued. Other matrix properties were discussed, e.g., in \cite{GarAdm2016a,Hla2017da,Hla2019b,HorHla2017a,KreLak1998}.

In particular, we leave aside several kinds of interval matrices that were already studied: $M$-matrices, $P$-matrices and positive (semi-)definite matrices. We review the basic definitions and properties, which we will need later on.

A matrix $A\in\R^{n\times n}$ is an $M$-matrix if $A=s I_n-N$ for some $N\geq0$ such that $s>\rho(N)$. There are many other equivalent conditions known \cite{Plem1977,HorJoh1991}, among which we will use that one stating that $A$ is an $M$-matrix if an only if it is a $Z$-matrix and $A^{-1}\geq0$. By \cite{BarNud1974}, an interval matrix $\imace{A}$ is strongly an $M$-matrix if and only if $\umace{A}$ is an $M$-matrix and $\omace{A}_{ij}\leq0$ for all $i\not=j$.

Analogously, a matrix $A\in\R^{n\times n}$ is an $M_0$-matrix if $A=s I_n-N$ for some $N\geq0$ such that $s\geq\rho(N)$.
Equivalently, $A$ is a matrix with nonpositive off-diagonal entries and nonnegative real eigenvalues \cite{FiePta1962,Hog2007}.

$H$-matrices closely relate to $M$-matrices. A matrix $A\in\R^{n\times n}$ is an $H$-matrix if the so called comparison matrix $\langle A\rangle$ is an $M$-matrix, where $\langle A\rangle_{ii}=|a_{ii}|$ and  $\langle A\rangle_{ij}=-|a_{ij}|$ for $i\not=j$.
By \cite{Neu1984,Neu1990}, an interval matrix $\imace{A}$ is strongly an $H$-matrix if and only if $\langle\imace{A}\rangle$ is an $M$-matrix, where $\langle \imace{A}\rangle_{ii}=\min\{|a|\mmid a\in\inum{a}_{ii}\}$ and  $\langle\imace{A}\rangle_{ij}=-\max\{|\unum{a}_{ij}|,|\onum{a}_{ij}|\}$ for $i\not=j$.

Positive definite and positive semidefinite interval matrices were studied, e.g., in \cite{Hla2018a,Roh1994b,Roh2012b}. An interval matrix $\imace{A}$ is strongly positive semidefinite if and only if the matrix $\Mid{A}-D_s\Rad{A}D_s\in\imace{A}$ is positive semidefinite for each $s\in\{\pm1\}^n$. There are some sufficient conditions known, but the problem of checking strong positive semidefiniteness is co-NP-hard in general \cite{KreLak1998}. Similar results hold for positive definiteness.

A matrix $A\in\R^{n\times n}$ is a $P$-matrix if all principal minors of $A$ are positive. $P$-matrix property of interval matrices was addressed, e.g., in \cite{BiaGar1984,Hla2017b}.

We start with two classes that are simple to characterize both in the real and interval case, and then we discuss the computationally harder classes.

\subsection{$S$-matrix}

A matrix $A\in\R^{n\times n}$ is called an \emph{$S$-matrix} if there is $x>0$ such that $Ax>0$. The significance of this class is that the LCP is feasible for each $q\in\R^n$ if and only if $A$ is an S-matrix.

Strong $S$-matrix property of an interval matrix $\imace{A}\in\IR^{n\times n}$ is easy to characterize.

\begin{proposition}
$\imace{A}$ is strongly an $S$-matrix if and only if system $\umace{A}x>0$, $x>0$ is feasible.
\end{proposition}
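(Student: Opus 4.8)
The plan is to prove both implications directly, exploiting the monotonicity of the map $A\mapsto Ax$ in the entries of $A$ when $x>0$, together with the fact that the lower endpoint $\umace{A}$ is itself a member of $\imace{A}$.

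For the sufficiency direction, I would assume that there exists $x>0$ with $\umace{A}x>0$ and show that an arbitrary $A\in\imace{A}$ is an $S$-matrix witnessed by the \emph{same} $x$. Since $A\in\imace{A}$ gives $A\geq\umace{A}$ entrywise and $x>0$, the product $(A-\umace{A})x$ of a nonnegative matrix with a positive vector is nonnegative, so $Ax\geq\umace{A}x>0$. Thus $x$ certifies that $A$ is an $S$-matrix, and since $A$ was arbitrary, $\imace{A}$ is strongly an $S$-matrix.

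For the necessity direction, I would simply observe that $\umace{A}\in\imace{A}$, so the strong $S$-matrix property applied to this particular realization yields some $x>0$ with $\umace{A}x>0$, which is exactly the asserted feasibility of the system.

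The step I expect to require the most attention — though it is conceptually light — is the sufficiency direction, where the definition of the strong property permits the positive witness $x$ to depend on the realization $A$, whereas the argument must produce a single common $x$ valid for every $A\in\imace{A}$. This is precisely what the monotonicity observation resolves: the componentwise smallest matrix $\umace{A}$ is the hardest realization to satisfy, so a witness for it automatically serves all larger matrices in the interval.
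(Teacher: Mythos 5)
Your proof is correct and follows essentially the same route as the paper: the sufficiency direction uses the identical monotonicity argument ($Ax\geq\umace{A}x>0$ for all $A\in\imace{A}$ when $x>0$), and the necessity direction is the immediate observation that $\umace{A}\in\imace{A}$, which the paper leaves implicit. Your explicit remark that the single witness $x$ works uniformly across all realizations is exactly the point of the paper's one-line argument.
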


\begin{proof}
If $\umace{A}x>0$, $x>0$ has a solution $x^*$, then $Ax^*\geq\umace{A}x^*>0$ for each $A\in\imace{A}$. Therefore, every $A\in\imace{A}$ is an $S$-matrix.
\end{proof}

\subsection{$Z$-matrix}

A matrix $A\in\R^{n\times n}$ is called a \emph{$Z$-matrix} if $a_{ij}\leq0$ for each $i\not=j$.
$Z$-matrices emerge in the context of Lemke's complementary pivot algorithm, because it processes any LCP with a $Z$-matrix.

It is easy to see that the strong $Z$-matrix property reduces to $Z$-matrix property of the upper bound matrix $\omace{A}$.

\begin{proposition}
$\imace{A}$ is strongly a $Z$-matrix if and only if $\omace{A}$ is a $Z$-matrix.
\end{proposition}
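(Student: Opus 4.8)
The plan is to exploit the entrywise monotonicity of the $Z$-matrix condition in the off-diagonal entries. The $Z$-matrix property requires only that each off-diagonal entry be nonpositive, and for any $A\in\imace{A}$ each such entry is bounded above by the corresponding entry of $\omace{A}$. Hence the worst case for this one-sided condition is realized at the single extremal matrix $\omace{A}$, which reduces the strong property to an ordinary check on $\omace{A}$.

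For the forward implication I would simply observe that $\omace{A}\in\imace{A}$. Thus if $\imace{A}$ is strongly a $Z$-matrix, then in particular its member $\omace{A}$ is a $Z$-matrix.

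For the converse, assume $\omace{A}$ is a $Z$-matrix, i.e.\ $\onum{a}_{ij}\leq0$ for all $i\neq j$. Fix an arbitrary $A\in\imace{A}$. For each pair $i\neq j$, the defining inequalities of the interval matrix give $a_{ij}\leq\onum{a}_{ij}\leq0$, so $A$ is a $Z$-matrix. Since $A$ was arbitrary, $\imace{A}$ is strongly a $Z$-matrix.

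There is essentially no genuine obstacle here; the argument is immediate once one notices that the off-diagonal $Z$-condition is a pure upper bound, so testing the extremal matrix $\omace{A}$ suffices. The only point to keep in mind is that the diagonal entries are unconstrained in the definition of a $Z$-matrix, so no condition on them (and hence none involving $\umace{A}$) enters the characterization.
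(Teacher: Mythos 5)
Your proof is correct and is exactly the elementary argument the paper has in mind (the paper omits the proof, noting only that it is "easy to see"): the forward direction follows from $\omace{A}\in\imace{A}$, and the converse from $a_{ij}\leq\onum{a}_{ij}\leq0$ for $i\neq j$. Nothing is missing.
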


\subsection{Copositive matrix}

A matrix $A\in\R^{n\times n}$ is called \emph{copositive} if $x^TAx\geq0$ for each $x\geq0$. It is \emph{strictly copositive} if $x^TAx>0$ for each $x\gneqq0$. 
A copositive matrix ensures that the complementary pivot algorithm for solving the LCP works. 
A strictly copositive matrix in addition implies that the LCP has a solution for each $q\in\R^n$. 
Checking whether $A$ is copositive is a co-NP-hard problem \cite{MurKab1987}.

From the definition immediately follows:

\begin{proposition}\label{propCopos}
$\imace{A}$ is strongly (strictly) copositive if and only if $\umace{A}$ is (strictly) copositive.
\end{proposition}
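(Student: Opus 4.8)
The plan is to exploit the sign structure of the quadratic form $x^T A x$ over the nonnegative orthant, exactly as the $S$-matrix proposition exploited the monotonicity of $Ax$ in the entries of $A$. The crucial observation is that for any $x\geq0$ every product $x_i x_j$ is nonnegative, so increasing an entry $A_{ij}$ can only increase $x^T A x$ on the relevant domain. This makes the extremal matrix $\umace{A}$ (the entrywise smallest element of $\imace{A}$) the worst case for copositivity.

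First I would dispose of the "only if" direction, which is immediate: since $\umace{A}\in\imace{A}$, strong (strict) copositivity of $\imace{A}$ forces $\umace{A}$ to be (strictly) copositive. The content is in the converse. Assume $\umace{A}$ is copositive and take an arbitrary $A\in\imace{A}$, so that $\umace{A}\leq A\leq\omace{A}$ entrywise. For any $x\geq0$ I would write
\begin{align*}
x^T A x=\sum_{i,j} A_{ij}x_ix_j\geq\sum_{i,j}\umace{A}_{ij}x_ix_j=x^T\umace{A}x\geq0,
\end{align*}
where the first inequality uses $A_{ij}\geq\umace{A}_{ij}$ together with $x_ix_j\geq0$, and the last inequality is the copositivity of $\umace{A}$. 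Hence every $A\in\imace{A}$ is copositive, which is strong copositivity.

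For the strict case I would run the identical chain restricted to $x\gneqq0$: the first inequality is unchanged (the products $x_ix_j$ remain nonnegative), and strict copositivity of $\umace{A}$ gives $x^T\umace{A}x>0$, so $x^TAx>0$ for every $x\gneqq0$ and every $A\in\imace{A}$. I do not expect any real obstacle here; the only point requiring a moment's care is that the monotonicity step needs $x\geq0$ (so that all $x_ix_j\geq0$), which is precisely the domain on which copositivity is tested, so the argument goes through without any restriction on the off-diagonal radii of $\imace{A}$. The proposition is therefore a direct consequence of entrywise monotonicity, mirroring the $S$-matrix and $Z$-matrix cases.
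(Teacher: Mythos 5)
Your argument is correct and is exactly the reasoning the paper has in mind when it states that the proposition "from the definition immediately follows": for $x\geq0$ all products $x_ix_j$ are nonnegative, so $x^TAx$ is entrywise monotone in $A$ and $\umace{A}$ is the worst case. No gaps; the proof matches the paper's (implicit) approach.
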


Matrix $A$ is (strictly) copositive if and only if its symmetric counterpart $\frac{1}{2}(A+A^T)$ is (strictly) copositive. That is why we can without loss of generality focus on symmetric matrices. In particular, we assume that $\Mid{A}$ and $\Rad{A}$ are symmetric.

Since checking copositivity is co-NP-hard, it is desirable to inspect some polynomially solvable classes of problems.

\begin{proposition}
Let $\Mid{A}$ be an $M$-matrix. Then
\begin{enumerate}[(1)]
\item
$\imace{A}$ is strongly copositive if and only if $\umace{A}$ is an $M_0$-matrix;
\item
$\imace{A}$ is strongly strictly copositive if and only if $\umace{A}$ is an $M$-matrix.
\end{enumerate}
\end{proposition}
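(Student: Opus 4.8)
The plan is to reduce the statement to a definiteness question about $\umace{A}$. By Proposition~\ref{propCopos}, $\imace{A}$ is strongly (strictly) copositive if and only if $\umace{A}=\Mid{A}-\Rad{A}$ is (strictly) copositive, so it suffices to characterize ordinary (strict) copositivity of $\umace{A}$ under the hypothesis on $\Mid{A}$. The first observation is that $\umace{A}$ is a symmetric $Z$-matrix: it is symmetric because $\Mid{A}$ and $\Rad{A}$ are assumed symmetric, and for $i\neq j$ its off-diagonal entries satisfy $\umace{A}_{ij}=\Mid{A}_{ij}-\Rad{A}_{ij}\leq0$, since $\Mid{A}_{ij}\leq0$ (as $\Mid{A}$ is an $M$-matrix, hence a $Z$-matrix) and $\Rad{A}_{ij}\geq0$.

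The crux is then the following equivalence for a symmetric $Z$-matrix $B$: $B$ is copositive if and only if it is positive semidefinite, and strictly copositive if and only if it is positive definite. The nontrivial directions (copositive $\Rightarrow$ semidefinite) I would prove by an absolute-value argument, which I expect to be the main point of the proof. For an arbitrary $x\in\R^n$ let $|x|$ denote its entrywise absolute value. Because the off-diagonal entries of $B$ are nonpositive and $|x_i||x_j|\geq x_ix_j$, each off-diagonal term satisfies $B_{ij}|x_i||x_j|\leq B_{ij}x_ix_j$, while the diagonal terms coincide; summing gives $|x|^TB|x|\leq x^TBx$. If $B$ is copositive, then $|x|\geq0$ yields $0\leq|x|^TB|x|\leq x^TBx$, so $x^TBx\geq0$ for every $x$, i.e.\ $B$ is positive semidefinite. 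If $B$ is strictly copositive, then any $x\neq0$ has $|x|\gneqq0$, whence $0<|x|^TB|x|\leq x^TBx$ and $B$ is positive definite. The reverse implications are immediate, since (strict) definiteness on all of $\R^n$ forces (strict) copositivity on the nonnegative orthant. Everything else is bookkeeping.

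It then remains to translate definiteness into the announced matrix classes. Since $\umace{A}$ is symmetric, positive semidefiniteness is the same as having only nonnegative eigenvalues; combined with the nonpositive off-diagonal pattern established above, this is exactly the characterization of an $M_0$-matrix recalled earlier, giving item~(1). Likewise, positive definiteness of the symmetric $Z$-matrix $\umace{A}$ means all eigenvalues are positive; writing $\umace{A}=sI_n-N$ with $N\geq0$ for $s\geq\max_i\umace{A}_{ii}$ and using that $\rho(N)$ is the largest eigenvalue of the symmetric nonnegative matrix $N$, the smallest eigenvalue of $\umace{A}$ equals $s-\rho(N)$, so positivity of all eigenvalues is equivalent to $s>\rho(N)$, i.e.\ to $\umace{A}$ being an $M$-matrix, which yields item~(2).
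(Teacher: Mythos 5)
Your proposal is correct, and it shares the paper's skeleton (reduce to $\umace{A}$ via Proposition~\ref{propCopos}, note that $\umace{A}$ is a symmetric $Z$-matrix, and in the \quo{if} direction pass through positive semidefiniteness of symmetric $M_0$-matrices), but the harder \quo{only if} direction is argued differently. The paper works by direct contradiction: if $\umace{A}=sI_n-N$ is not an $M_0$-matrix, it takes the Perron vector $x\gneqq0$ of $N$, checks entrywise that $(Ax)_i<0$ exactly where $x_i>0$ and $(Ax)_i=0$ where $x_i=0$, and concludes $x^TAx<0$, exhibiting an explicit nonnegative certificate of non-copositivity. You instead prove the general lemma that a symmetric $Z$-matrix $B$ is copositive if and only if it is positive semidefinite (and strictly copositive iff positive definite), via the inequality $|x|^TB|x|\leq x^TBx$, and then identify positive (semi)definite symmetric $Z$-matrices with $M$- (resp.\ $M_0$-) matrices through the spectral characterization $s-\rho(N)$. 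Your route buys a cleaner, reusable statement --- under the hypothesis, strong (strict) copositivity of $\imace{A}$ is literally strong positive (semi)definiteness of $\umace{A}$ --- and it handles both items and both directions uniformly; the paper's route is shorter for the specific claim and produces a concrete violating vector. Both ultimately rest on Perron--Frobenius (you use that $\rho(N)$ is the top eigenvalue of the symmetric nonnegative $N$; the paper uses the Perron vector itself). The only nitpick is that in item~(2) you should note, as is standard, that the property $s>\rho(N)$ does not depend on which representation $\umace{A}=sI_n-N$ with $N\geq0$ is chosen; this is routine and does not affect correctness.
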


\begin{proof}
(1) \quo{If.}
If $\umace{A}$ is an $M_0$-matrix, then it is positive semidefinite \cite{CotPan2009} and so it is copositive. By Proposition~\ref{propCopos}, strong copositivity of $\imace{A}$ follows.

\quo{Only if.}
Suppose to the contrary that $\umace{A}$ is not an $M_0$-matrix. Then we can write $A=sI_n-N$, where $N\geq0$ and $\rho(N)>s$. For the corresponding Perron vector $x\gneqq0$ we have $Nx=\rho(N)x\gneqq sx$, from which $Ax\lneqq 0$. If $x_i=0$, then $(Nx)_i=0$ and so $(Ax)_i=0$. Similarly, if $x_i>0$, then $(Nx)_i>sx_i$ and so $(Ax)_i<0$. Hence $Ax$ and $x$ have the same nonzero entries, whence $x^TAx<0$; a contradiction.

(2) For strict copositivity we proceed analogously.
\end{proof}

\begin{corollary}
Let $\Mid{A}=I_n$. Then 
\begin{enumerate}[(1)]
\item
$\imace{A}$ is strongly copositive if and only if $\rho(\Rad{A})\leq 1$;
\item
$\imace{A}$ is strongly strictly copositive if and only if $\rho(\Rad{A})<1$.
\end{enumerate}
\end{corollary}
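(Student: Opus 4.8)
The plan is to derive this corollary directly from the preceding proposition by observing that $\Mid{A}=I_n$ is the special case in which the midpoint matrix is both an $M$-matrix and the identity, so the whole characterization reduces to a spectral-radius condition on the radius matrix $\Rad{A}$. First I would note that $\Mid{A}=I_n$ certainly is an $M$-matrix (it is a $Z$-matrix with nonnegative inverse), so the hypothesis of the proposition is satisfied and we may apply both of its parts. Then everything hinges on rewriting the lower bound matrix $\umace{A}=\Mid{A}-\Rad{A}=I_n-\Rad{A}$ and translating the $M_0$-matrix and $M$-matrix conditions on this particular matrix into conditions on $\rho(\Rad{A})$.

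The key step is the following. Since $\Rad{A}\geq0$ by definition of the radius of an interval matrix, the matrix $\umace{A}=I_n-\Rad{A}$ is automatically a $Z$-matrix (its off-diagonal entries are $-\Rad{A}_{ij}\leq0$), and it is exactly of the splitting form $sI_n-N$ with $s=1$ and $N=\Rad{A}\geq0$. By the definition of $M_0$-matrix recalled in the excerpt, $\umace{A}=I_n-\Rad{A}$ is an $M_0$-matrix precisely when $s=1\geq\rho(N)=\rho(\Rad{A})$, i.e.\ when $\rho(\Rad{A})\leq1$; and by the definition of $M$-matrix, $\umace{A}$ is an $M$-matrix precisely when $s=1>\rho(\Rad{A})$, i.e.\ when $\rho(\Rad{A})<1$. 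Substituting these equivalences into parts (1) and (2) of the proposition yields parts (1) and (2) of the corollary, respectively.

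I expect no serious obstacle here; the only point requiring a little care is justifying that $I_n-\Rad{A}$ with $\Rad{A}\geq0$ can indeed be taken in the canonical splitting form with $s=1$ and that the minimal such $s$ (equivalently, the diagonal value $1$) lines up with the spectral-radius threshold. This is immediate because $\Rad{A}\geq0$ makes the splitting $s=1$, $N=\Rad{A}$ the natural one and $\rho(N)$ is exactly the quantity compared against $s$ in both definitions. Thus the proof is a one-line specialization: apply the proposition with $\Mid{A}=I_n$ and read off the $M_0$-matrix and $M$-matrix conditions on $I_n-\Rad{A}$ as $\rho(\Rad{A})\leq1$ and $\rho(\Rad{A})<1$.
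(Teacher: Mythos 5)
Your proposal is correct and matches the paper's own proof: the paper likewise notes that $\Mid{A}=I_n$ is an $M$-matrix, applies the preceding proposition, and reads off that $\umace{A}=I_n-\Rad{A}$ is an $M_0$-matrix (resp.\ $M$-matrix) if and only if $\rho(\Rad{A})\leq1$ (resp.\ $<1$). The paper's version is just terser; no substantive difference.
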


\begin{proof}
Obviously, $\Mid{A}=I_n$ is an $M$-matrix. Further, $I_n-\Rad{A}$ is an $M_0$-matrix if and only if $\rho(\Rad{A})\leq1$. Similarly for strict copositivity.
\end{proof}

\subsection{Semimonotone matrix}

A matrix $A\in\R^{n\times n}$ is called \emph{semimonotone (an $E_0$-matrix)} if the LCP has a unique solution for each $q>0$. Equivalently, for each index set $\emptyset\not=I\subseteq\{1,\dots,n\}$ the system
\begin{align}\label{sysSemimono}
A^{}_{I,I}x<0,\ \ x\geq0
\end{align}
is infeasible. 
By \cite{Tsen2000}, checking whether $A$ is semimonotone is a co-NP-hard problem.

From the definition we simply derive:

\begin{proposition}
$\imace{A}$ is strongly semimonotone if and only if $\umace{A}$ is semimonotone.
\end{proposition}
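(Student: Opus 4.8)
The plan is to prove both directions by reducing the interval statement to a statement about the vertices of the interval matrix, and then selecting the single worst-case matrix $\umace{A}$. The key observation is that the defining condition for semimonotonicity, namely infeasibility of the system \nref{sysSemimono} $A_{I,I}x<0$, $x\geq0$ for every nonempty index set $I$, is monotone in the entries of $A$: making entries larger can only make the system $A_{I,I}x<0$ harder to satisfy (for the fixed sign constraint $x\geq0$).

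First I would prove the easy direction. If $\imace{A}$ is strongly semimonotone, then in particular $\umace{A}\in\imace{A}$ is semimonotone, so this implication is immediate. The substance is the converse.

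For the converse, suppose $\umace{A}$ is semimonotone and take any $A\in\imace{A}$; I must show $A$ is semimonotone. Fix a nonempty $I\subseteq\{1,\dots,n\}$ and suppose for contradiction that $A_{I,I}x<0$, $x\geq0$ is feasible with some witness $x^*$. Restricting to the block indexed by $I$ we have $(A_{I,I}x^*)_i<0$ for all $i\in I$, while $x^*_I\geq0$. Because $\umace{A}\leq A$ entrywise, we have $\umace{A}_{I,I}\leq A_{I,I}$, and since $x^*_I\geq0$ this gives $\umace{A}_{I,I}x^*_I\leq A_{I,I}x^*_I<0$. Hence $x^*_I$ (extended by the relevant coordinates, or simply viewed on the block $I$) witnesses feasibility of $\umace{A}_{I,I}x<0$, $x\geq0$, contradicting semimonotonicity of $\umace{A}$. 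Therefore no such $I$ exists and $A$ is semimonotone; as $A\in\imace{A}$ was arbitrary, $\imace{A}$ is strongly semimonotone.

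I do not expect a genuine obstacle here: the entire argument rests on the sign-monotonicity of the system \nref{sysSemimono} together with the nonnegativity constraint $x\geq0$, which is exactly what lets the entrywise inequality $\umace{A}\leq A$ preserve the strict inequality $A_{I,I}x<0$. The only point requiring mild care is the bookkeeping that the relevant quantities live on the subblock indexed by $I$, so that $\umace{A}_{I,I}\leq A_{I,I}$ combined with $x^*_I\geq0$ yields $\umace{A}_{I,I}x^*_I\leq A_{I,I}x^*_I$; this is the standard fact that multiplying an entrywise inequality between matrices by a nonnegative vector preserves the inequality.
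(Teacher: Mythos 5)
Your proof is correct and is exactly the argument the paper has in mind: the paper states this proposition without proof, remarking only that it follows ``from the definition,'' and the intended reasoning is precisely your observation that $\umace{A}_{I,I}\leq A_{I,I}$ together with $x\geq0$ gives $\umace{A}_{I,I}x\leq A_{I,I}x$, so any witness against semimonotonicity of some $A\in\imace{A}$ is also a witness against $\umace{A}$. The only cosmetic point is that in \nref{sysSemimono} the variable $x$ already lives on the block indexed by $I$, so no extension or restriction of the witness is needed.
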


The next result shows a class of interval matrices, for which checking strong semimonotonicity can be performed effectively in polynomial time.

\begin{proposition}
Let $\Mid{A}$ be an $M_0$-matrix. Then $\imace{A}$ is strongly semimonotone if and only if $\umace{A}$ is an $M_0$-matrix.
\end{proposition}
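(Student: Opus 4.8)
The plan is to reduce the statement to a characterization of semimonotonicity that holds for $Z$-matrices, and then to exploit the fact that the hypothesis forces $\umace{A}$ into this class. By the preceding proposition, $\imace{A}$ is strongly semimonotone if and only if its lower bound $\umace{A}$ is semimonotone, so it suffices to show that, under the assumption that $\Mid{A}$ is an $M_0$-matrix, the matrix $\umace{A}$ is semimonotone exactly when it is an $M_0$-matrix. First I would record the crucial structural fact that $\umace{A}$ is automatically a $Z$-matrix: since $\Mid{A}$ is an $M_0$-matrix its off-diagonal entries are nonpositive, and because $\Rad{A}\geq0$ we get $(\umace{A})_{ij}=(\Mid{A})_{ij}-(\Rad{A})_{ij}\leq0$ for all $i\neq j$, using $\umace{A}=\Mid{A}-\Rad{A}$. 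Thus the whole statement follows from the general claim: \emph{a $Z$-matrix is semimonotone if and only if it is an $M_0$-matrix.}

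For the implication \quo{$M_0$ $\Rightarrow$ semimonotone} I would proceed as follows. A principal submatrix of an $M_0$-matrix is again an $M_0$-matrix (for instance because $B$ is an $M_0$-matrix iff $B+\eps I$ is an $M$-matrix for every $\eps>0$, and principal submatrices of $M$-matrices are $M$-matrices). Hence for every nonempty $I$ the block $A_{I,I}$ is an $M_0$-matrix, and it is enough to show that an $M_0$-matrix $B=sI_n-N$, $N\geq0$, $s\geq\rho(N)$, admits no $x\geq0$, $x\neq0$, with $Bx<0$. Taking the nonnegative left Perron eigenvector $u\gneqq0$ of $N$ gives $u^TB=(s-\rho(N))u^T\geq0$, so $u^T(Bx)=(u^TB)x\geq0$; but $Bx<0$ together with $u\gneqq0$ forces $u^T(Bx)<0$, a contradiction.

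For the converse \quo{semimonotone $\Rightarrow$ $M_0$} I would argue by contraposition, mirroring the copositivity proof earlier in the paper. If the $Z$-matrix $A$ is not an $M_0$-matrix, write $A=sI_n-N$ with $N\geq0$ and $\rho(N)>s$; the right Perron vector $x\gneqq0$ satisfies $Nx=\rho(N)x$, hence $Ax=(s-\rho(N))x\lneqq0$. Setting $I=\{i : x_i>0\}\neq\emptyset$, the zero entries of $x$ drop out so that $(A_{I,I}x_I)_i=(Ax)_i=(s-\rho(N))x_i<0$ for all $i\in I$, while $x_I>0$; thus the system $A_{I,I}y<0$, $y\geq0$ is feasible and $A$ is not semimonotone.

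The main obstacle is the possible reducibility of $N$: the Perron vector need not be strictly positive, so I must carefully pass to the support $I$ of $x$ to obtain a genuine violation of semimonotonicity on the principal submatrix $A_{I,I}$ (and, symmetrically, use only $u\gneqq0$ rather than $u>0$ in the easy direction). Once the localization to the support and the closure of the $M_0$-class under taking principal submatrices are in place, both directions are short; collecting them and invoking the preceding reduction completes the proof.
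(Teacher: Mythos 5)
Your proposal is correct and follows essentially the same route as the paper: both reduce the problem to the lower bound $\umace{A}$, observe that it is a $Z$-matrix because $\Mid{A}$ is an $M_0$-matrix and $\Rad{A}\geq0$, and settle both directions via the decomposition $sI_n-N$ and Perron--Frobenius, including the same passage to the support of the Perron vector in the converse direction. The only cosmetic difference is in the forward direction, where you use the left Perron eigenvector of $N$ to rule out $Bx<0$, whereas the paper uses the right-hand Collatz--Wielandt bound (namely that $Nx>sx$ with $x>0$ forces $\rho(N)>s$); both arguments are sound.
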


\begin{proof}
\quo{If.}
Suppose to the contrary that there are $A\in\imace{A}$ and $I$ such that \nref{sysSemimono} has a solution~$x$. Without loss of generality assume that $x>0$; otherwise we restrict to $A_{I',I'}$, where $I'=\{i\in I\mmid x_i>0\}$. Since $\umace{A}$ is an $M_0$-matrix, also $\umace{A}_{I,I}$ is an $M_0$-matrix. That is, we can write it as $\umace{A}_{I,I}=sI_m-N$, where $N\geq0$ and $\rho(N)\leq s$. However, from \nref{sysSemimono} we have $Nx>sx$, from which $\rho(N)> s$; a contradiction.

\quo{Only if.}
Suppose to the contrary that $\umace{A}$ is not an $M_0$-matrix. That is, $\umace{A}=sI_n-N$, where $N\geq0$ and $\rho(N)> s$. Let $x\gneqq0$ be the Perron vector corresponding to~$N$, so that $Nx=\rho(N)x\geq sx$. Then $\umace{A}x=sx-Nx\leq0$. Define $I=\{i\mmid x_i>0\}$. Since $(Nx)_i>sx_i$ for each $i\in I$, we get $\umace{A}_{I,I}x_I<0$ for $x_I>0$; a contradiction.
\end{proof}

\begin{corollary}
Let $\Mid{A}=I_n$. Then $\imace{A}$ is strongly semimonotone if and only if $\rho(\Rad{A})\leq 1$.
\end{corollary}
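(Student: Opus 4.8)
The plan is to derive this corollary as a direct specialization of the preceding Proposition. First I would verify that the hypothesis of that Proposition is met: the identity matrix $\Mid{A}=I_n$ is an $M_0$-matrix, since $I_n=1\cdot I_n-N$ with $N=0\geq0$ and $s=1\geq0=\rho(N)$. Hence the Proposition applies directly and yields that $\imace{A}$ is strongly semimonotone if and only if $\umace{A}$ is an $M_0$-matrix.

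It then remains to translate the $M_0$ condition on $\umace{A}$ into a spectral-radius condition. Since $\umace{A}=\Mid{A}-\Rad{A}=I_n-\Rad{A}$, I would show that $I_n-\Rad{A}$ is an $M_0$-matrix if and only if $\rho(\Rad{A})\leq1$ — precisely the equivalence already invoked in the corollary following Proposition~\ref{propCopos}. For the \quo{if} direction, I would use that a radius matrix always satisfies $\Rad{A}\geq0$; writing $I_n-\Rad{A}=sI_n-N$ with $s=1$ and $N=\Rad{A}\geq0$ then exhibits exactly the defining form of an $M_0$-matrix, because $s=1\geq\rho(\Rad{A})=\rho(N)$.

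For the converse, which is the only step involving a spectral argument, I would invoke Perron--Frobenius: since $\Rad{A}\geq0$ is a nonnegative matrix, $\rho(\Rad{A})$ is itself a real eigenvalue of $\Rad{A}$, so $1-\rho(\Rad{A})$ is a real eigenvalue of $I_n-\Rad{A}$. By the equivalent characterization of $M_0$-matrices as matrices with nonpositive off-diagonal entries and nonnegative real eigenvalues, this forces $1-\rho(\Rad{A})\geq0$, i.e.\ $\rho(\Rad{A})\leq1$. The only real obstacle is this eigenvalue bookkeeping, and it is routine once one observes that $\Rad{A}\geq0$ makes the Perron root immediately identifiable; everything else is an immediate application of the preceding Proposition.
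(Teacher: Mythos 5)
Your proposal is correct and follows essentially the same route as the paper: the paper leaves this corollary without an explicit proof, but its intended argument (made explicit in the analogous corollary for copositivity) is exactly your specialization of the preceding Proposition with $\Mid{A}=I_n$ an $M_0$-matrix, combined with the equivalence that $\umace{A}=I_n-\Rad{A}$ is an $M_0$-matrix if and only if $\rho(\Rad{A})\leq1$. Your Perron--Frobenius bookkeeping for the converse direction is a correct and slightly more detailed justification of a step the paper treats as obvious.
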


\subsection{Principally nondegenerate matrix}

A matrix $A\in\R^{n\times n}$ is called \emph{principally nondegenerate} if all its principal minors are nonzero. 
In the context of the LCP, such matrix guarantees that the problem has finitely many solutions (including zero) for every $q\in\R^n$.

From the definition, an interval matrix $\imace{A}$ is strongly principally nondegenerate if its principal submatrices are strongly nonsingular (i.e., contain nonsingular matrices only).
Below, we state a finite reduction. By the same reasoning as in \cite{Hla2017b} we can enumerate that the condition requires $5^n$ instances. This is a high number, but justified by two facts: First, checking principally nondegeneracy of a real matrix is co-NP-hard \cite{Tsen2000}. Second, checking whether an interval matrix is strongly nonsingular matrix is co-NP-hard, too \cite{PolRoh1993}.

\begin{proposition}
$\imace{A}$ is strongly principally nondegenerate if and only if
\begin{align}\label{ineqPropNondegStr}
\det\left(D_{e-|y|}+D_{|y|}\Mid{A}D_{|z|}\right)
\det\left(D_{e-|y|}+D_{|y|}\Mid{A}D_{|z|}-D_{y}\Rad{A}D_{z}\right)>0
\end{align}
for each $y,z\in\{0,\pm1\}^m$ such that $|y|=|z|$.
\end{proposition}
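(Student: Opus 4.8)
The plan is to reduce strong principal nondegeneracy to the strong nonsingularity (regularity) of every principal subblock, to invoke the standard vertex characterization of interval regularity, and then to repackage the resulting family of sign conditions into the single inequality \nref{ineqPropNondegStr}. The reduction is essentially the observation already made above: $\imace{A}$ is strongly principally nondegenerate if and only if for every $\emptyset\ne I\subseteq\{1,\dots,n\}$ the interval principal submatrix $\imace{A}_{I,I}$ is strongly nonsingular. This step is clean precisely because the entries vary independently, so that $\{A_{I,I}\mmid A\in\imace{A}\}=\imace{A}_{I,I}$; thus requiring every $A\in\imace{A}$ to have all principal minors nonzero is the same as requiring each box $\imace{A}_{I,I}$ to contain no singular matrix. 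The case $I=\emptyset$ yields only the conventional empty minor $1$ and may be ignored.

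Next I would recall Rohn's characterization of regularity: a square interval matrix $\imace{B}$ of order $k$ is strongly nonsingular if and only if
\begin{equation}\label{rohncond}
\det(\Mid{B})\,\det(\Mid{B}-D_s\Rad{B}D_t)>0\quad\text{for all }s,t\in\{\pm1\}^{k}.
\end{equation}
The nontrivial content here is that the determinant, being multilinear in the matrix entries, already attains its extreme signs over the whole box at the finitely many matrices $\Mid{B}-D_s\Rad{B}D_t$; I would cite this result rather than reprove it. Applying \nref{rohncond} with $\imace{B}=\imace{A}_{I,I}$, $\Mid{B}=\Mid{A}_{I,I}$, $\Rad{B}=\Rad{A}_{I,I}$ gives that $\imace{A}_{I,I}$ is strongly nonsingular if and only if $\det(\Mid{A}_{I,I})\det(\Mid{A}_{I,I}-D_s\Rad{A}_{I,I}D_t)>0$ for all $s,t\in\{\pm1\}^{|I|}$.

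It then remains to recognize these per-block conditions inside \nref{ineqPropNondegStr}. For $y,z\in\{0,\pm1\}^n$ with $|y|=|z|$ I would set $I=\{i\mmid y_i\ne0\}$ together with $s=y_I$, $t=z_I$, and verify two elementary block-determinant identities. After a symmetric permutation grouping $I$ and its complement, the matrix $D_{e-|y|}+D_{|y|}\Mid{A}D_{|z|}$ becomes block diagonal with $\Mid{A}_{I,I}$ in the $I$-block and the identity in the complementary block, so its determinant equals $\det(\Mid{A}_{I,I})$; moreover $D_{y}\Rad{A}D_{z}$ is supported on the $I$-block, where it equals $D_{y_I}\Rad{A}_{I,I}D_{z_I}$, so subtracting it turns the second factor into $\det(\Mid{A}_{I,I}-D_{y_I}\Rad{A}_{I,I}D_{z_I})$. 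Hence \nref{ineqPropNondegStr} for a fixed $(y,z)$ is exactly \nref{rohncond} for $\imace{A}_{I,I}$ with signs $(y_I,z_I)$, and letting $(y,z)$ range over all of $\{0,\pm1\}^n$ with $|y|=|z|$ is precisely letting $I$ range over all subsets while $(y_I,z_I)$ ranges over all signs on $I$. Combined with the reduction of the first paragraph, this yields the equivalence.

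The only genuinely nontrivial ingredient is Rohn's regularity theorem, which I treat as a black box; the rest is bookkeeping. Within that bookkeeping, the point needing care is the role of the factor $\det(\Mid{A}_{I,I})$ as a common reference sign: one must check that \quo{every $\det(\Mid{A}_{I,I}-D_s\Rad{A}_{I,I}D_t)$ shares the sign of $\det(\Mid{A}_{I,I})$} is equivalent to regularity of $\imace{A}_{I,I}$, and not merely to the weaker statement that these determinants share some common sign. This follows because regularity forces the determinant to be sign-constant on the connected box $\imace{A}_{I,I}$, which contains $\Mid{A}_{I,I}$; I would record this continuity and connectedness remark explicitly. The other place to be careful is the constraint $|y|=|z|$, which is exactly what guarantees that $y$ and $z$ share the support $I$, so that no off-block entries contaminate either determinant.
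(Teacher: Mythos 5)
Your proposal is correct and follows essentially the same route as the paper: reduce strong principal nondegeneracy to strong nonsingularity of each principal interval submatrix, invoke Rohn's determinantal characterization of regularity, and identify the two determinants in \nref{ineqPropNondegStr} via the symmetric permutation that makes $D_{e-|y|}+D_{|y|}\Mid{A}D_{|z|}-D_{y}\Rad{A}D_{z}$ block diagonal with $\Mid{A}_{I,I}-D_{y_I}\Rad{A}_{I,I}D_{z_I}$ and an identity block. The extra remarks on the common reference sign and on $|y|=|z|$ fixing the support are sound but already implicit in the cited regularity theorem.
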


\begin{proof}
By \cite{Roh2009}, $\imace{A}$ is strongly nonsingular if and only if
\begin{align*}
\det(\Mid{A})\det(\Mid{A}-D_{y}\Rad{A}D_{z})>0,\ \ \forall y,z\in\{\pm1\}^m.
\end{align*}
We need to check this condition for each principal submatrix of~$\imace{A}$. We claim it is as stated in \nref{ineqPropNondegStr}. 
Consider the permutation of rows and columns, represented by the permutation matrix~$P$, that brings the zeroes of $y,z$ into the last entries. Then the matrix $P(D_{e-|y|}+D_{|y|}\Mid{A}D_{|z|}-D_{y}\Rad{A}D_{z})P^T$ is a block diagonal matrix. The right bottom block is the identity matrix and the left top block is the principal submatrix of $\Mid{A}-D_{y}\Rad{A}D_{z}$ indexed by $I:=\{i\mmid |y_i|=1\}=\{i\mmid |z_i|=1\}$.

Similarly for $D_{e-|y|}+D_{|y|}\Mid{A}D_{|z|}$.
\end{proof}

An efficient test can be performed only for specific types of matrices.
Since principally nondegenerate matrices are closed under nonzero row or column scaling, the following can directly be extended to an interval matrix $\imace{A}$ such that $D_y\Mid{A}D_z$ is an $M$-matrix for some $y,z\in\{\pm1\}^n$.

\begin{proposition}
Let $\Mid{A}$ be an $M$-matrix. Then $\imace{A}$ is strongly principally nondegenerate if and only if it is an $H$-matrix. 
\end{proposition}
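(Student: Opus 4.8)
The plan is to prove the two implications separately, treating the \quo{if} direction as a general fact about $H$-matrices and reserving the $M$-matrix hypothesis on $\Mid{A}$ for the \quo{only if} direction.

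For the \quo{if} part, suppose $\imace{A}$ is an $H$-matrix, i.e.\ $\langle\imace{A}\rangle$ is an $M$-matrix. First I would observe that every $A\in\imace{A}$ satisfies $\langle A\rangle\geq\langle\imace{A}\rangle$ entrywise: on the diagonal $|a_{ii}|\geq\min\{|a|\mmid a\in\inum{a}_{ii}\}$, and off the diagonal $-|a_{ij}|\geq-\max\{|\unum{a}_{ij}|,|\onum{a}_{ij}|\}$. Thus $\langle A\rangle$ is a $Z$-matrix dominating the $M$-matrix $\langle\imace{A}\rangle$ entrywise, hence is itself an $M$-matrix (monotonicity of $M$-matrices), so each $A\in\imace{A}$ is an $H$-matrix. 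Since $\langle A_{I,I}\rangle=\langle A\rangle_{I,I}$ is a principal submatrix of an $M$-matrix and therefore again an $M$-matrix, every principal submatrix $A_{I,I}$ is an $H$-matrix and hence nonsingular. So no principal submatrix of any $A\in\imace{A}$ is singular, which is exactly strong principal nondegeneracy. Note this direction does not use the hypothesis on $\Mid{A}$.

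For the \quo{only if} part I would argue by contraposition: assuming $\imace{A}$ is not an $H$-matrix, I exhibit a singular principal submatrix. The witness is the comparison matrix $B:=\langle\imace{A}\rangle$ itself. Since $\Mid{A}$ is an $M$-matrix, its off-diagonal entries are nonpositive, so each interval $\inum{a}_{ij}$ with $i\neq j$ is either nonpositive or contains $0$; a short check then gives $B_{ij}=-\max\{|\unum{a}_{ij}|,|\onum{a}_{ij}|\}=\unum{a}_{ij}\in\inum{a}_{ij}$, while $B_{ii}=\min\{|a|\mmid a\in\inum{a}_{ii}\}\in\inum{a}_{ii}$ trivially. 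Hence $B\in\imace{A}$, and $B$ is a $Z$-matrix. Because $\imace{A}$ is not an $H$-matrix, $B$ is a $Z$-matrix that is not an $M$-matrix; for $Z$-matrices the $M$-matrix property coincides with being a $P$-matrix, so some principal minor satisfies $\det(B_{I,I})\leq0$. To finish, I would invoke continuity of the determinant. As $\Mid{A}$ is an $M$-matrix it is a $P$-matrix, so $\det(\Mid{A}_{I,I})>0$ for that same $I$. Both $\Mid{A}_{I,I}$ and $B_{I,I}$ lie in the convex set $\imace{A}_{I,I}$, so the segment $t\mapsto(1-t)\Mid{A}_{I,I}+tB_{I,I}$ stays inside it; along it the determinant passes from a positive value to a nonpositive one and hence vanishes for some $t^*$. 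This produces a singular matrix in $\imace{A}_{I,I}$, so the $I$-principal submatrix of $\imace{A}$ is not strongly nonsingular, contradicting strong principal nondegeneracy.

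The step I expect to be the main obstacle is recognizing that $B=\langle\imace{A}\rangle$ is the correct witness: the $M$-matrix sign pattern of $\Mid{A}$ is precisely what forces $B\in\imace{A}$, and the equivalence \quo{$Z$-matrix that is not a $P$-matrix has a nonpositive principal minor} is what converts non-$H$-ness into a genuine singular principal submatrix via the continuity argument. Matching the index set $I$ also hinges on $\Mid{A}$ being a $P$-matrix, so that all of its principal minors are simultaneously positive.
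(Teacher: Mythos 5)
Your proof is correct, but it takes a genuinely more self-contained route than the paper. The paper disposes of the statement in three lines: it cites Neumaier's Proposition~4.1.7, which under the hypothesis that $\Mid{A}$ is an $M$-matrix yields the equivalence \quo{$\imace{A}$ strongly nonsingular $\Leftrightarrow$ $\imace{A}$ is an $H$-matrix}, and then observes that both the $M$-matrix and the $H$-matrix properties are inherited by principal submatrices, so the same equivalence holds for every principal interval submatrix at once. Your argument essentially re-proves the needed content of that cited result from scratch: the \quo{if} half via the entrywise bound $\langle A\rangle\geq\langle\imace{A}\rangle$ together with the fact that a $Z$-matrix dominating an $M$-matrix is itself an $M$-matrix, and the \quo{only if} half by recognizing that the sign pattern forced by the $M$-matrix hypothesis places the comparison matrix $\langle\imace{A}\rangle$ inside $\imace{A}$, then converting its failure to be an $M$-matrix into a nonpositive principal minor (the $Z$-matrix/$P$-matrix equivalence) and finally into an actual singular matrix in $\imace{A}_{I,I}$ by a determinant continuity argument along the segment from $\Mid{A}_{I,I}$. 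What your version buys is transparency: it isolates exactly where the hypothesis on $\Mid{A}$ enters (only to guarantee $\langle\imace{A}\rangle\in\imace{A}$ and $\det\Mid{A}_{I,I}>0$), at the cost of length compared with the paper's citation. One small point deserves a half-sentence in your write-up: $\min\{|a|\mmid a\in\inum{a}_{ii}\}$ belongs to $\inum{a}_{ii}$ only because the diagonal intervals cannot lie entirely below zero (the diagonal of the $M$-matrix $\Mid{A}$ is positive); for an arbitrary interval this membership would fail, so it is not quite \quo{trivial}.
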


\begin{proof}
By Neumaier \cite[Prop.~4.1.7]{Neu1990}, our assumption implies that $\imace{A}$ is strongly nonsingular if and only if $\imace{A}$ is an $H$-matrix.
Principal submatrix of an $M$-matrix is again an $M$-matrix, and the same property holds for $H$-matrices. Therefore the above result applies for principal submatrices, too, from which the statement follows.
\end{proof}

Under the assumption that $\Mid{A}$ is an $M$-matrix we have that $\imace{A}$ is an $H$-matrix if and only if $\umace{A}$ is an $M$-matrix. Therefore we can equivalently test whether $\umace{A}$ is an $M$-matrix in the above proposition.

\begin{corollary}
Let $\Mid{A}=I_n$. Then $\imace{A}$ is strongly principally nondegenerate if and only if $\rho(\Rad{A})<1$.
\end{corollary}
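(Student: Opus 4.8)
The plan is to obtain the corollary as a short specialization of the preceding Proposition together with the remark following it. First I would observe that $\Mid{A}=I_n$ is trivially an $M$-matrix: writing $I_n=1\cdot I_n-0$ exhibits the required decomposition with $0\geq 0$ and $1>\rho(0)=0$. Hence the hypothesis of the preceding Proposition is met, and it gives that $\imace{A}$ is strongly principally nondegenerate if and only if $\imace{A}$ is an $H$-matrix. Invoking the remark recorded just after that Proposition---valid precisely because $\Mid{A}$ is an $M$-matrix---this is in turn equivalent to $\umace{A}$ being an $M$-matrix. Thus the entire statement reduces to deciding when the single matrix $\umace{A}$ is an $M$-matrix.

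For the final step I would simply compute $\umace{A}=\Mid{A}-\Rad{A}=I_n-\Rad{A}$. Since the radius matrix is nonnegative, $\Rad{A}\geq 0$, this is already presented in the canonical $M$-matrix form $1\cdot I_n-\Rad{A}$ with nonnegative off-set $\Rad{A}$. Using the characterization that, for a nonnegative matrix $B$, the matrix $sI_n-B$ is an $M$-matrix if and only if $s>\rho(B)$, I conclude that $\umace{A}=I_n-\Rad{A}$ is an $M$-matrix if and only if $1>\rho(\Rad{A})$, i.e.\ $\rho(\Rad{A})<1$. Chaining the three equivalences yields the claim.

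This argument is essentially a one-line deduction, so there is no serious obstacle; the only point deserving care is the converse implication in the last step, namely that no $M$-matrix decomposition can exist once $\rho(\Rad{A})\geq 1$. This is guaranteed because the Perron root $\rho(\Rad{A})$ is a genuine (real) eigenvalue of the nonnegative matrix $\Rad{A}$, so $1-\rho(\Rad{A})$ is an eigenvalue of $\umace{A}$; an $M$-matrix must have all eigenvalues with positive real part, forcing $\rho(\Rad{A})<1$. Note in particular that a possibly nonzero diagonal of $\Rad{A}$ causes no trouble, since only $\Rad{A}\geq 0$ is used; as a consistency check, if some $\Rad{A}_{ii}\geq 1$ then $\rho(\Rad{A})\geq\Rad{A}_{ii}\geq 1$ and $\umace{A}$ already has a nonpositive diagonal entry, so it cannot be an $M$-matrix.
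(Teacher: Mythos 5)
Your proof is correct and follows exactly the route the paper intends: specialize the preceding Proposition (with $\Mid{A}=I_n$ an $M$-matrix), apply the remark reducing the $H$-matrix condition to $\umace{A}=I_n-\Rad{A}$ being an $M$-matrix, and use the standard spectral-radius characterization $\rho(\Rad{A})<1$. No gaps; your care about the converse and about nonzero diagonal entries of $\Rad{A}$ is sound but not needed beyond what you wrote.
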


\begin{proposition}
Let $\Mid{A}$ be positive definite. Then $\imace{A}$ is strongly principally nondegenerate if and only if it strongly positive definite. 
\end{proposition}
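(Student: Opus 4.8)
The plan is to prove the two implications separately, reading positive definiteness in the usual sense that $x^TAx>0$ for every nonzero $x$; as in the copositive subsection I will take $\Mid{A}$ and $\Rad{A}$ symmetric, so that the extreme matrices $\Mid{A}-D_s\Rad{A}D_s$ are symmetric and belong to $\imace{A}$. The direction \quo{strongly positive definite $\Rightarrow$ strongly principally nondegenerate} is the structural, easy one; the substantive direction is the converse, which I would establish by contraposition via a continuity argument on the smallest eigenvalue.

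For the easy direction, suppose $\imace{A}$ is strongly positive definite and fix any $A\in\imace{A}$ together with any index set $I$. Extending a nonzero vector $x_I$ by zeros yields a nonzero $x$ with $x^TAx=x_I^TA_{I,I}x_I>0$, so $A_{I,I}$ is positive definite and hence nonsingular. Thus every principal submatrix of every $A\in\imace{A}$ is nonsingular, which is precisely strong principal nondegeneracy.

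For the converse I would argue contrapositively: assuming $\imace{A}$ is not strongly positive definite, I produce a singular matrix inside $\imace{A}$, which already defeats strong nonsingularity of the whole matrix (the principal submatrix indexed by $\{1,\dots,n\}$) and therefore strong principal nondegeneracy. By the extreme-matrix characterization of strong positive definiteness there is $s\in\{\pm1\}^n$ for which $B:=\Mid{A}-D_s\Rad{A}D_s$ is not positive definite. I then consider the segment $B(t):=\Mid{A}-tD_s\Rad{A}D_s$ for $t\in[0,1]$. Each $B(t)$ lies in $\imace{A}$, since its $(i,j)$ entry deviates from $(\Mid{A})_{ij}$ by $t(\Rad{A})_{ij}\leq(\Rad{A})_{ij}$, and each $B(t)$ is symmetric. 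The smallest eigenvalue $\lambda_{\min}(B(t))$ depends continuously on $t$, is positive at $t=0$ because $B(0)=\Mid{A}$ is positive definite, and is nonpositive at $t=1$. By the intermediate value theorem there is $t^*\in(0,1]$ with $\lambda_{\min}(B(t^*))=0$, so $B(t^*)\in\imace{A}$ is singular, completing the contrapositive.

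The main obstacle is the gap between \emph{positive definiteness}, which concerns only the symmetric part, and \emph{singularity}, which concerns the matrix itself: for a general nonsymmetric realization, loss of positive definiteness forces the symmetric part to become singular but need not make the matrix singular. Restricting to symmetric data, equivalently to the symmetric extreme matrices $\Mid{A}-D_s\Rad{A}D_s$, closes this gap, because there a vanishing smallest eigenvalue is genuine singularity; this is the step I would verify most carefully. The remaining ingredients—convexity of $\imace{A}$, membership of $B(t)$ in $\imace{A}$, and continuity of the eigenvalues of symmetric matrices—are routine.
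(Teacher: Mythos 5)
Your forward direction is fine and, notably, needs no symmetry: if $x^TAx>0$ for all $x\not=0$, then padding $x_I$ with zeros shows every principal submatrix of every $A\in\imace{A}$ is nonsingular. The problem is the converse, and specifically your opening move: you declare, \quo{as in the copositive subsection}, that $\Mid{A}$ and $\Rad{A}$ may be taken symmetric. That reduction is legitimate for copositivity and for positive definiteness because those are properties of the symmetric part $\frac{1}{2}(A+A^T)$ alone; it is \emph{not} legitimate for principal nondegeneracy, since $\det(A_{I,I})$ is not a function of the symmetric part. Replacing $\imace{A}$ by a symmetrized version changes which matrices (and which principal minors) you must control, so your argument proves the proposition only under the additional hypothesis that $\Mid{A}$ and $\Rad{A}$ are symmetric --- a hypothesis the statement does not make, and which the neighbouring propositions of this subsection (with $M$-matrix midpoints) clearly do not intend. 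You in fact name exactly this issue as \quo{the main obstacle}: without symmetry, the intermediate-value argument on $\lambda_{\min}$ of the symmetric part of $B(t)$ produces a matrix whose \emph{symmetric part} is singular, which says nothing about singularity of the matrix itself. Restricting to symmetric data does not close that gap; it changes the theorem being proved.

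The paper sidesteps the difficulty by citing Rohn's theorem \cite{Roh1994b}: for an interval matrix whose midpoint is positive definite, strong nonsingularity is equivalent to strong positive definiteness, with no symmetry assumption. Combined with heredity of positive definiteness to principal submatrices (each $\Mid{A}_{I,I}$ is again positive definite, so the theorem applies to each interval principal submatrix $\imace{A}_{I,I}$), the proposition follows in two lines. Your continuity argument is a correct, self-contained proof of the symmetric special case of that cited theorem, but to repair the proof in general you would either have to invoke Rohn's result as the paper does, or supply the genuinely nontrivial step that converts loss of positive definiteness somewhere in a nonsymmetric interval matrix with positive definite midpoint into an actual singular member of $\imace{A}$.
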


\begin{proof}
Since $\Mid{A}$ is positive definite, we have by \cite{Roh1994b} that $\imace{A}$ is strongly nonsingular if and only is it is strongly positive definite. 
Since positive definiteness is preserved to principal submatrices, the statement follows.
\end{proof}

Checking strong positive definiteness of $\imace{A}$ is known to be co-NP-hard, but there are various sufficient conditions known; see \cite{Roh1994b}.

\subsection{Column sufficient matrix}

A matrix $A\in\R^{n\times n}$ is \emph{column sufficient} if for each pair of disjoint index sets $I,J\subseteq\{1,\dots,n\}$, $I\cup J\not=\emptyset$, the system
\begin{align}\label{sysColSuff}
\begin{pmatrix}A^{}_{I,I}&-A^{}_{I,J}\\-A^{}_{J,I}&A^{}_{J,J}\end{pmatrix}
x\lneqq0,\ \ x>0
\end{align}
is infeasible.
Checking this condition is co-NP-hard \cite{Tsen2000}, which justifies necessity of inspecting all index sets $I,J$.
Among other properties, column sufficiency implies that for any $q\in\R^n$ the solution set of the LCP is a convex set (including the empty set).

\begin{proposition}
$\imace{A}$ is strongly column sufficient if and only if system 
\begin{align}\label{sysPropColSuffStr}
\begin{pmatrix}
 \umace{A}_{I,I}&-\omace{A}_{I,J}\\
 -\omace{A}_{J,I}&\umace{A}_{J,J}
\end{pmatrix}
x\lneqq0,\ \ x>0
\end{align}
is infeasible for each admissible $I,J$.
\end{proposition}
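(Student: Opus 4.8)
The plan is to single out, for each admissible pair of disjoint index sets $I,J$, one \emph{worst-case} realization of the block matrix that minimizes the matrix--vector product against every positive vector, and thereby reduce the feasibility question for all $A\in\imace{A}$ to feasibility for this one matrix. Write, for $A\in\imace{A}$,
$$M_{I,J}(A):=\begin{pmatrix}A^{}_{I,I}&-A^{}_{I,J}\\-A^{}_{J,I}&A^{}_{J,J}\end{pmatrix},\qquad \tilde{M}_{I,J}:=\begin{pmatrix}\umace{A}_{I,I}&-\omace{A}_{I,J}\\-\omace{A}_{J,I}&\umace{A}_{J,J}\end{pmatrix},$$
so that \nref{sysPropColSuffStr} is exactly the system $\tilde{M}_{I,J}x\lneqq0$, $x>0$.

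First I would establish the governing monotonicity fact: for every fixed $x>0$ one has $\tilde{M}_{I,J}x\leq M_{I,J}(A)x$ entrywise, for all $A\in\imace{A}$. This is pure sign bookkeeping. In a row indexed by $i\in I$, the columns $j\in I$ enter with coefficient $+x_j>0$ (so the entry is minimized by the choice $A_{ij}=\umace{A}_{ij}$), whereas the columns $j\in J$ enter with coefficient $-x_j<0$ (minimized by $A_{ij}=\omace{A}_{ij}$); the rows indexed by $J$ are symmetric. The decisive observation is that the minimizer is \emph{the same} matrix $\tilde{M}_{I,J}$ for every $x>0$, because only the signs of the coefficients, not their magnitudes, determine the optimal endpoints. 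I would also record that $\tilde{M}_{I,J}=M_{I,J}(A^*)$ for the realization $A^*\in\imace{A}$ taking $A^*_{ij}=\umace{A}_{ij}$ when $i,j$ lie in the same block and $A^*_{ij}=\omace{A}_{ij}$ when they lie in different blocks (entries outside $I\cup J$ being arbitrary).

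Next I would dispatch the two directions. For \quo{only if}, if $\imace{A}$ is strongly column sufficient then in particular the realization $A^*$ above is column sufficient, so for this very pair $I,J$ the system $M_{I,J}(A^*)x\lneqq0$, $x>0$ is infeasible; since $M_{I,J}(A^*)=\tilde{M}_{I,J}$, system \nref{sysPropColSuffStr} is infeasible. For \quo{if}, suppose \nref{sysPropColSuffStr} is infeasible for all $I,J$, yet for contradiction some $A\in\imace{A}$ fails column sufficiency, so $M_{I,J}(A)x\lneqq0$, $x>0$ holds for some $I,J$ and some $x>0$. The monotonicity fact then gives $\tilde{M}_{I,J}x\leq M_{I,J}(A)x\leq0$.

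The single point that requires care --- and the main obstacle --- is the strict component of the relation $\lneqq$: I must verify that the dominating vector $\tilde{M}_{I,J}x$ is genuinely nonzero, not merely nonpositive. This follows because $M_{I,J}(A)x$ has at least one strictly negative entry, say in position $k$, and then $(\tilde{M}_{I,J}x)_k\leq(M_{I,J}(A)x)_k<0$. Hence $\tilde{M}_{I,J}x\lneqq0$ with $x>0$, which contradicts the assumed infeasibility of \nref{sysPropColSuffStr} and completes the argument.
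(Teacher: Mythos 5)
Your proposal is correct and follows essentially the same route as the paper: both directions rest on the entrywise monotonicity $\tilde{M}_{I,J}x\leq M_{I,J}(A)x$ for $x>0$ together with the observation that $\tilde{M}_{I,J}$ is itself realized by a matrix in $\imace{A}$. Your explicit check that the strict component of $\lneqq$ survives the passage to $\tilde{M}_{I,J}$ is a detail the paper leaves implicit, but it is the same argument.
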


\begin{proof}
If $\imace{A}$ is strongly column sufficient, then \nref{sysPropColSuffStr} must be infeasible, because the matrix there comes from $\imace{A}$. Conversely, if some  $A\in\imace{A}$ is not column sufficient, then \nref{sysColSuff} has a solution $x^*$ for certain $I,J$. Since
$$
\begin{pmatrix}
 \umace{A}_{I,I}&-\omace{A}_{I,J}\\
 -\omace{A}_{J,I}&\umace{A}_{J,J}
\end{pmatrix}x^*
\leq
\begin{pmatrix}A_{I,I}&-A_{I,J}\\-A_{J,I}&A_{J,J}\end{pmatrix}x^*,
$$
we have that $x^*$ is a solution to \nref{sysPropColSuffStr}; a contradiction.
\end{proof}

The above result also suggests a reduction to finitely many (namely, $2^n$) instances.

\begin{proposition}
$\imace{A}$ is strongly column sufficient if and only if matrices of the form $A_{ss}=\Mid{A}-D_s\Rad{A}D_s$ are  column sufficient for each $s\in\{\pm1\}^n$.
\end{proposition}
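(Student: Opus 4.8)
The plan is to build on the preceding proposition, which already reduces strong column sufficiency to the infeasibility of \nref{sysPropColSuffStr} over all admissible disjoint index sets $I,J$, and then to recognize each such system as the column sufficiency system \nref{sysColSuff} of one particular vertex matrix $A_{ss}=\Mid{A}-D_s\Rad{A}D_s$. The first thing I would record is the entrywise form of $A_{ss}$: its $(i,j)$ entry is $\Mid{A}_{ij}-s_is_j\Rad{A}_{ij}$, so it equals $\umace{A}_{ij}$ whenever $s_i=s_j$ and $\omace{A}_{ij}$ whenever $s_i\neq s_j$. In particular every $A_{ss}$ belongs to $\imace{A}$, which makes the forward implication immediate: if $\imace{A}$ is strongly column sufficient, then each $A_{ss}\in\imace{A}$ is column sufficient.

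For the converse I would argue by contraposition. If $\imace{A}$ is not strongly column sufficient, the preceding proposition supplies admissible $I,J$ for which \nref{sysPropColSuffStr} is feasible. The key step is to choose $s\in\{\pm1\}^n$ with $s_i=1$ for $i\in I$, $s_i=-1$ for $i\in J$, and $s_i$ arbitrary for $i\notin I\cup J$. By the entrywise rule above, two indices lying both in $I$, or both in $J$, carry equal signs, so $(A_{ss})_{I,I}=\umace{A}_{I,I}$ and $(A_{ss})_{J,J}=\umace{A}_{J,J}$, whereas indices split across $I$ and $J$ carry opposite signs, so $(A_{ss})_{I,J}=\omace{A}_{I,J}$ and $(A_{ss})_{J,I}=\omace{A}_{J,I}$. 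Substituting these four blocks into the column sufficiency system \nref{sysColSuff} for the matrix $A_{ss}$ with the same index sets $I,J$ reproduces \nref{sysPropColSuffStr} verbatim: the two off-diagonal minus signs are supplied by \nref{sysColSuff} itself, while the sign pattern $D_s$ is precisely what turns the off-diagonal lower bounds into the upper bounds $\omace{A}_{I,J},\omace{A}_{J,I}$. Hence \nref{sysColSuff} is feasible for $A_{ss}$, so $A_{ss}$ fails to be column sufficient, completing the contraposition.

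I expect the only delicate point to be the sign bookkeeping: one must verify that a single global sign vector $s$ faithfully encodes the partition $(I,J)$ that witnesses the violation, and that the freedom in $s$ outside $I\cup J$ causes no trouble, which holds because \nref{sysColSuff} for $A_{ss}$ involves only the submatrix indexed by $I\cup J$. The cleanest way to package the whole correspondence is the identity $B=D_\sigma (A_{ss})_{I\cup J,\,I\cup J}\,D_\sigma$, where $B$ is the coefficient matrix of \nref{sysPropColSuffStr} and $\sigma=+1$ on $I$, $\sigma=-1$ on $J$; once this is in place the two systems share the same feasibility status and no inequalities or estimates remain to be checked.
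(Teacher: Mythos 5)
Your proposal is correct and follows essentially the same route as the paper: the forward direction via $A_{ss}\in\imace{A}$, and the converse by contraposition through the preceding proposition's system \nref{sysPropColSuffStr}, choosing $s=+1$ on $I$ and $-1$ on $J$ so that the blocks of $A_{ss}$ become $\umace{A}_{I,I}$, $\umace{A}_{J,J}$, $\omace{A}_{I,J}$, $\omace{A}_{J,I}$ and the two systems coincide. Your explicit identity $B=D_\sigma (A_{ss})_{I\cup J,\,I\cup J}D_\sigma$ and the remark that $s$ is free outside $I\cup J$ merely make explicit what the paper leaves implicit.
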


\begin{proof}
If $\imace{A}$ is strongly column sufficient, then $A_{ss}$ is column sufficient since $A_{ss}\in\imace{A}$. Conversely, if $\imace{A}$ is not strongly column sufficient, then \nref{sysPropColSuffStr} has a solution. However, feasibility of  \nref{sysPropColSuffStr} implies that $A_{ss}$ is not column sufficient for $s\in\{\pm1\}^n$ defined as follows: $s_i:=1$ if $i\in I$ and $s_i:=-1$ otherwise, because
\begin{equation*}
(A_{ss})_{I,I}=\umace{A}_{I,I},\ \ 
(A_{ss})_{J,J}=\umace{A}_{J,J},\ \ 
(A_{ss})_{I,J}=\omace{A}_{I,J}.
\qedhere
\end{equation*}
\end{proof}

Below, we state a polynomially recognizable class. To this end, recall that a nonnegative matrix $A$ is called \emph{irreducible} if $(I_n+A)^{n-1}>0$, or equivalently, $P^TAP$ is block triangular (with at least two blocks) for no permutation matrix~$P$.

\begin{lemma}\label{lmmPosMaceRho}
Let $A\geq0$ be irreducible and $x\geq0$ such that $x\lneqq Ax$. Then $\rho(A)>1$.
\end{lemma}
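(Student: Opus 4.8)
The plan is to invoke the Perron–Frobenius theorem for irreducible nonnegative matrices, which is the natural tool here. Recall that for an irreducible $A\geq0$, the spectral radius $\rho(A)$ is a positive simple eigenvalue associated with a strictly positive (Perron) left eigenvector. Let $w>0$ be the left Perron eigenvector, so that $w^TA=\rho(A)w^T$. The idea is to pair the hypothesis $x\lneqq Ax$ against $w$ on the left, exploiting the strict positivity of $w$ to convert the componentwise inequality $x\lneqq Ax$ into a strict scalar inequality.

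First I would multiply the inequality $x\leq Ax$ on the left by $w^T$, giving $w^Tx\leq w^T(Ax)=(w^TA)x=\rho(A)\,w^Tx$. The key point is that the inequality $x\lneqq Ax$ means $x\leq Ax$ with $x\neq Ax$, so the vector $Ax-x$ is nonnegative and nonzero. Since $w>0$ strictly, pairing a strictly positive vector against a nonnegative nonzero vector yields $w^T(Ax-x)>0$, i.e. the inequality is \emph{strict}: $w^Tx<\rho(A)\,w^Tx$. Finally, since $x\geq0$ and $x\lneqq Ax$ forces $x\neq0$ (if $x=0$ then $Ax=0$ and $x\lneqq Ax$ fails), and $w>0$, we have $w^Tx>0$; dividing the strict inequality $w^Tx<\rho(A)w^Tx$ by the positive quantity $w^Tx$ gives $1<\rho(A)$, as desired.

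The main obstacle to watch is the justification that the left-multiplication produces a \emph{strict} inequality rather than merely $w^Tx\leq\rho(A)w^Tx$. This is exactly where both irreducibility and the direction $x\lneqq Ax$ (as opposed to $x\leq Ax$) are essential: irreducibility guarantees a strictly positive Perron vector $w$, and the strict positivity of every coordinate of $w$ is what turns the nonzero nonnegative discrepancy $Ax-x$ into a strictly positive pairing. Without irreducibility one could only secure a nonnegative Perron vector with possible zero entries, and the discrepancy $Ax-x$ might be supported precisely on those zero coordinates, collapsing the argument. I would therefore state explicitly that $w^T(Ax-x)>0$ follows from $w>0$ and $Ax-x\gneqq0$, and flag that this is the step where irreducibility is used.
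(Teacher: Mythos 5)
Your proof is correct. It does, however, take a different route from the paper. You work with the left Perron eigenvector $w>0$ of the irreducible matrix $A$ and pair it against the discrepancy $Ax-x\gneqq0$ to get the strict scalar inequality $w^Tx<\rho(A)\,w^Tx$, then divide by $w^Tx>0$ (after correctly noting that $x\lneqq Ax$ forces $x\neq0$). The paper instead multiplies $x$ by $(I_n+A)^{n-1}$, which is entrywise positive by irreducibility, to produce a strictly positive vector $\tilde{x}=(I_n+A)^{n-1}x$ satisfying the strict componentwise inequality $\tilde{x}<A\tilde{x}$, and then cites the standard Horn--Johnson result that the existence of such a vector implies $\rho(A)>1$. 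Both arguments use irreducibility in an essential and analogous way -- yours to guarantee $w>0$ so that the nonzero nonnegative vector $Ax-x$ pairs strictly positively, the paper's to guarantee $(I_n+A)^{n-1}>0$ so that the same vector maps to a strictly positive one. Your version is slightly more self-contained in that it reduces everything to the single Perron--Frobenius fact about the positive left eigenvector, whereas the paper's version outsources the final step to a cited inequality characterization of the spectral radius; on the other hand, the paper's transformation trick is a reusable device for upgrading $\lneqq$ to $<$ under irreducibility. Either proof is acceptable.
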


\begin{proof}
Define $\tilde{x}:=(I_n+A)^{n-1}x>0$. Since $x\lneqq Ax$, we have
$$
(I_n+A)^{n-1}x< (I_n+A)^{n-1}Ax=A(I_n+A)^{n-1}x,
$$
from which $\tilde{x}<A\tilde{x}$. By \cite{HorJoh1985}, $\rho(A)>1$.
\end{proof}

\begin{proposition}\label{propColSuffMmat}
Let $\Mid{A}$ be an $M$-matrix and $\Rad{A}$ irreducible. Then $\imace{A}$ is strongly column sufficient if and only $\umace{A}$ is an $M_0$-matrix. 
\end{proposition}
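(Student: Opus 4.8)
The plan is to route both implications through the single nonnegative matrix $N:=sI_n-\umace{A}$, where $s>0$ is chosen large enough that $N\ge 0$. Two preliminary observations drive everything. First, since $\Mid{A}$ is an $M$-matrix it is a $Z$-matrix, so $\Mid{A}_{ij}\le 0$ for $i\ne j$; as $\Rad{A}\ge 0$ this gives $\umace{A}_{ij}=\Mid{A}_{ij}-\Rad{A}_{ij}\le 0$, i.e.\ $\umace{A}$ is itself a $Z$-matrix and $N\ge 0$ is well defined. Second, $N_{ij}=-\umace{A}_{ij}=|\Mid{A}_{ij}|+\Rad{A}_{ij}\ge \Rad{A}_{ij}$ for $i\ne j$, so the off-diagonal support of $N$ contains that of $\Rad{A}$; hence irreducibility of $\Rad{A}$ forces $N$ to be irreducible. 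With this choice, $\umace{A}$ is an $M_0$-matrix exactly when $\rho(N)\le s$, and Lemma~\ref{lmmPosMaceRho} (applied to $N/s$) is the tool that will convert the combinatorial feasibility statements into statements about $\rho(N)$.

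For the \quo{only if} direction I would argue by contraposition. If $\umace{A}$ is not an $M_0$-matrix then $\rho(N)>s$. Because $N$ is nonnegative and irreducible, Perron--Frobenius supplies a strictly positive eigenvector $x>0$ with $Nx=\rho(N)x$, whence $\umace{A}x=(s-\rho(N))x<0$. Taking the full index set $I=\{1,\dots,n\}$ and $J=\emptyset$ in \nref{sysColSuff} (so the defining matrix is just $\umace{A}$) shows that $\umace{A}\in\imace{A}$ is not column sufficient, and therefore $\imace{A}$ is not strongly column sufficient.

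For the \quo{if} direction I would assume $\umace{A}$ is an $M_0$-matrix, so $\rho(N)\le s$, and suppose for contradiction that $\imace{A}$ is not strongly column sufficient. By the characterization~\nref{sysPropColSuffStr} there are admissible $I,J$ and $x>0$ solving that system; set $K=I\cup J$. Writing the coefficient matrix as $sI_{|K|}-M'$, its diagonal and within-block off-diagonal entries coincide with those of $N_{K,K}=sI_{|K|}-\umace{A}_{K,K}$, while the cross-block entries equal $\omace{A}_{ij}$. The elementary bound $|\omace{A}_{ij}|\le -\umace{A}_{ij}=N_{ij}$ (again from $\Mid{A}_{ij}\le 0$, $\Rad{A}_{ij}\ge 0$) yields $|M'|\le N_{K,K}$ entrywise. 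Feasibility of the system reads $M'x\gneqq sx$, and combining $N_{K,K}x\ge |M'|x\ge M'x\gneqq sx$ with $x>0$ gives $N_{K,K}x\gneqq sx$.

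The main obstacle is then to turn this subsystem inequality into one for the full, irreducible matrix $N$: a principal submatrix $N_{K,K}$ of an irreducible matrix need not itself be irreducible, so Lemma~\ref{lmmPosMaceRho} cannot be applied to $N_{K,K}$ directly. The remedy I would use is to pad $x$ with zeros: define $\hat{x}\in\R^n$ by $\hat{x}_K=x$ and $\hat{x}_i=0$ for $i\notin K$. For $i\in K$ the $i$-th row gives $(N\hat{x})_i=(N_{K,K}x)_i\ge s\hat{x}_i$, with strict inequality at the index where $N_{K,K}x\gneqq sx$ is strict, and for $i\notin K$ nonnegativity of the off-diagonal entries of $N$ gives $(N\hat{x})_i\ge 0=s\hat{x}_i$. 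Thus $\hat{x}\ge 0$ satisfies $\hat{x}\lneqq (N/s)\hat{x}$, and since $N/s\ge 0$ is irreducible, Lemma~\ref{lmmPosMaceRho} yields $\rho(N)>s$, contradicting $\rho(N)\le s$. Hence $\imace{A}$ is strongly column sufficient, completing the proof.
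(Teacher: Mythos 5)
Your proof is correct, and while it shares the paper's overall strategy (translate the $M_0$-property of $\umace{A}$ into $\rho(N)\leq s$ for a nonnegative splitting $N$, and use Lemma~\ref{lmmPosMaceRho} plus Perron--Frobenius), it resolves the one genuinely delicate step differently. The difficulty in the \quo{if} direction is that the principal submatrix supporting an alleged violation of column sufficiency need not be irreducible, so Lemma~\ref{lmmPosMaceRho} cannot be invoked on it directly. The paper handles this by first reducing to $J=\emptyset$, taking $I$ of maximal cardinality, splitting into cases according to whether $\umace{A}_{I,I}$ is irreducible, and in the reducible case padding the solution with a small $\eps>0$ to show $\umace{A}_{J',I}=0$, which contradicts irreducibility of $\Rad{A}$. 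You instead pad the solution with exact zeros to lift the inequality $N_{K,K}x\gneqq sx$ to $\hat{x}\lneqq (N/s)\hat{x}$ for the full matrix, whose irreducibility you have already secured from that of $\Rad{A}$ via the entrywise bound $N_{ij}\geq\Rad{A}_{ij}$; one application of Lemma~\ref{lmmPosMaceRho} then finishes. This removes the case analysis and the maximality argument entirely, and your uniform bound $|M'|\leq N_{K,K}$ also handles the mixed-sign cross blocks of the system \nref{sysPropColSuffStr} without first reducing to $J=\emptyset$. The two arguments use the irreducibility hypothesis at different places (the paper to rule out a zero block $\Rad{A}_{J',I}$, you to guarantee a strictly positive Perron vector for the whole of $N$ and to apply the lemma globally), but both are sound; yours is arguably the cleaner of the two.
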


\begin{proof}
\quo{If.}
Suppose to the contrary that \nref{sysColSuff} has a solution $x$ for some $I,J$ and $A\in\imace{A}$. Then the same solution solves $A'x\lneqq 0$, $x>0$, where $A'=\umace{A}_{I\cup J,I\cup J}$. Thus we can assume without loss of generality that $J=\emptyset$; otherwise we set $I:=I\cup J$ and  $J:=\emptyset$. 
In addition, consider the instance, for which $I$ has maximal cardinality.
Since $A'$ is an $M_0$-matrix, it can be expressed as $A'=sI_m-N$, where $N\geq0$ and $\rho(N)\leq s$. Then 
$$
(sI_m-N)x\lneqq 0,\ \  x>0,
$$
from which $Nx\gneqq sx$. 

Consider two cases:
(1) $A'$ is irreducible. Then $N$ is irreducible and by Lemma~\ref{lmmPosMaceRho}, we have $\rho(N)>s$; a contradiction.

(2) $A'$ is reducible. Define $J':=\seznam{n}\setminus I$ and the vector $\tilde{x}^*$ such that $\tilde{x}^*_{I}=x$ and $\tilde{x}^*_{J'}=\eps e$, where $\eps>0$ is sufficiently small. Consider the system
\begin{align*}
\begin{pmatrix}
 \umace{A}^{}_{I,I}&\umace{A}^{}_{I,J'}\\
 \umace{A}^{}_{J',I}&\umace{A}^{}_{J',J'}
\end{pmatrix}
\tilde{x}\lneqq0,\ \ \tilde{x}>0.
\end{align*}
Since $\umace{A}^{}_{I,J'}\leq0$, vector $\tilde{x}^*$ solves the first block of inequalities. If matrix $\umace{A}^{}_{J',I}$ contains a nonzero row, say~$i$, then $\tilde{x}^*$ solves also the $i$th inequality (since $\umace{A}^{}_{J',I}\leq0$). Nevertheless, this cannot happen since we could put $I:=I\cup\{i\}$, which contradicts maximum cardinality of~$I$. Thus $\umace{A}^{}_{J',I}=0$, which means that $\Rad{A}_{J',I}=0$ and so $\Rad{A}$ is reducible; a contradiction.

\quo{Only if.}
Suppose to the contrary that $\umace{A}$ is not an $M_0$-matrix. 
Denote $A:=\umace{A}$ and split it as follows: $A=sI_n-N$, where $N\geq0$. Since $A$ is not an $M_0$-matrix, we have $\rho(N)>s$. Thus there is $x\gneqq0$ such that $Nx=\rho(N)x$. Define $I:=\{i\mmid x_i>0\}\not=\emptyset$ and $J:=\emptyset$. Then $x_I>0$ and $N_{I,I}x_I=\rho(N)x_I>sx_I$, from which $A_{I,I}x_I=(sI_m-N_{I,I})x_I<0$. 
Therefore \nref{sysColSuff} is feasible; a contradiction.
\end{proof}

Notice that assumption that $A\geq0$ is irreducible is necessary. As a counterexample, consider 
$$
\Mid{A}=\begin{pmatrix}1&0\\0&1\end{pmatrix},\quad
\Rad{A}=\begin{pmatrix}1&1\\0&1\end{pmatrix}.
$$
Then $\umace{A}=\left(\begin{smallmatrix}0&-1\\0&0\end{smallmatrix}\right)$, which is an $M_0$-matrix, but not column sufficient. 

In Proposition~\ref{propColSuffMmat}, the assumption that $\Mid{A}$ is an $M$-matrix can be directly extended such that $D_s\Mid{A}D_s$ is an $M$-matrix for some $s\in\{\pm1\}^n$. This is because column sufficient matrix $A$ is closed under transformation $D_sAD_s$.

\begin{corollary}
Let $\Mid{A}=I_n$ and $\Rad{A}$ irreducible. Then $\imace{A}$ is strongly column sufficient if and only if $\rho(\Rad{A})\leq 1$.
\end{corollary}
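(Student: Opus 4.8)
The plan is to derive this directly from Proposition~\ref{propColSuffMmat}, since the hypotheses of that proposition are already met. First I would observe that $\Mid{A}=I_n$ is an $M$-matrix: the decomposition $I_n=1\cdot I_n-0$ with $0\geq0$ and $1>\rho(0)=0$ exhibits the required form (this is exactly the observation made in the copositivity corollary). Together with the standing assumption that $\Rad{A}$ is irreducible, the hypotheses of Proposition~\ref{propColSuffMmat} hold verbatim, so that proposition yields that $\imace{A}$ is strongly column sufficient if and only if $\umace{A}$ is an $M_0$-matrix.

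It then remains only to translate the $M_0$-property of $\umace{A}$ into a spectral radius condition on $\Rad{A}$. Here I would use $\umace{A}=\Mid{A}-\Rad{A}=I_n-\Rad{A}$ together with the equivalence \quo{$I_n-\Rad{A}$ is an $M_0$-matrix if and only if $\rho(\Rad{A})\leq1$}, which was already invoked in the proof of the copositivity corollary. Chaining the two equivalences gives the claim in two lines.

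The only point that needs genuine care is this second equivalence, so let me indicate how I would justify it if it were not already available. For the \quo{if} direction, suppose $\rho(\Rad{A})\leq1$; since $\Rad{A}\geq0$, the decomposition $I_n-\Rad{A}=1\cdot I_n-\Rad{A}$ has nonnegative part $N:=\Rad{A}$ and scalar $s:=1\geq\rho(\Rad{A})=\rho(N)$, which is precisely the defining condition for an $M_0$-matrix. For the \quo{only if} direction I would appeal to the eigenvalue characterization of $M_0$-matrices recalled in the preliminaries: a $Z$-matrix is an $M_0$-matrix exactly when the real parts of its eigenvalues are nonnegative. The eigenvalues of $I_n-\Rad{A}$ are $1-\mu$ as $\mu$ ranges over the spectrum of $\Rad{A}$, and because $\Rad{A}\geq0$ its Perron root $\rho(\Rad{A})$ is an eigenvalue of largest real part; hence the smallest real part occurring among the eigenvalues of $I_n-\Rad{A}$ equals $1-\rho(\Rad{A})$, whose nonnegativity is equivalent to $\rho(\Rad{A})\leq1$.

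The main obstacle is therefore not the logical structure, which is merely a chaining of two equivalences, but ensuring that the passage from \quo{$\umace{A}$ is an $M_0$-matrix} to the bound on $\rho(\Rad{A})$ is independent of the chosen $sI_n-N$ representation. I expect no computation beyond recognizing that, for the nonnegative matrix $\Rad{A}$, the Perron root governs the smallest real eigenvalue of $I_n-\Rad{A}$.
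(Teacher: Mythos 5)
Your proposal is correct and follows exactly the route the paper intends: apply Proposition~\ref{propColSuffMmat} with $\Mid{A}=I_n$ (an $M$-matrix) and $\Rad{A}$ irreducible, then use the equivalence that $\umace{A}=I_n-\Rad{A}$ is an $M_0$-matrix if and only if $\rho(\Rad{A})\leq1$, just as in the copositivity corollary. Your extra care about the decomposition-independence of the $M_0$ condition, handled via the eigenvalue characterization and the Perron root of $\Rad{A}\geq0$, is sound and only makes the argument more self-contained.
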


\begin{proposition}
Let $\Mid{A}$ be positive semidefinite. Then $\imace{A}$ is strongly column sufficient if and only if it is strongly positive semidefinite.
\end{proposition}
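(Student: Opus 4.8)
The plan is to prove both implications by relating column sufficiency directly to positive semidefiniteness, exploiting the fact that $\Mid{A}$ is positive semidefinite (and symmetric, as arranged at the start of the copositivity subsection). The easy direction is \quo{if}: if $\imace{A}$ is strongly positive semidefinite, then every $A\in\imace{A}$ is positive semidefinite, hence copositive on every principal submatrix, and in particular the system \nref{sysColSuff} can never have a solution. Indeed, if $x>0$ solved the block system \nref{sysColSuff} for some disjoint $I,J$, set $w_I:=x_I$, $w_J:=-x_J$, and $w_k:=0$ otherwise; then $w^TAw=x^T\left(\begin{smallmatrix}A_{I,I}&-A_{I,J}\\-A_{J,I}&A_{J,J}\end{smallmatrix}\right)x\le 0$ with at least one strictly negative complementary-product term, contradicting (strict enough) positive semidefiniteness after a short sign argument. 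So every $A\in\imace{A}$ is column sufficient.

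For the \quo{only if} direction I would argue by contraposition: assume $\imace{A}$ is not strongly positive semidefinite and produce a realization $A\in\imace{A}$ that is not column sufficient. By the characterization of strong positive semidefiniteness recalled in the excerpt, strong positive semidefiniteness fails exactly when some vertex matrix $A_{ss}:=\Mid{A}-D_s\Rad{A}D_s$ is not positive semidefinite, i.e.\ there is $s\in\{\pm1\}^n$ and a vector $u$ with $u^TA_{ss}u<0$. The key idea is to convert this indefiniteness witness into a witness of the type \nref{sysColSuff}. Writing $v:=D_s u$ and splitting the index set according to the signs of the coordinates of $v$ (letting $I$ collect the indices where $s_i v_i>0$ and $J$ those where $s_i v_i<0$), one checks that $A_{ss}$ restricted and sign-adjusted is precisely the block matrix appearing in \nref{sysPropColSuffStr} for this choice of $I,J$, and that $|v|$ is a nonnegative vector making the associated quadratic form negative.

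The hard part is bridging the gap between a \emph{strict} negativity witness $u^TA_{ss}u<0$ and the \emph{infeasibility of the open system} \nref{sysColSuff}, which asks for $x>0$ with the block product $\lneqq 0$ — these are not literally the same object, since the quadratic-form inequality $x^T B x<0$ does not directly give a linear inequality $Bx\lneqq 0$. The cleanest route is to invoke the established equivalence between strong column sufficiency and infeasibility of \nref{sysPropColSuffStr} (equivalently, column sufficiency of all vertex matrices $A_{ss}$) proved just above, and then use the standard fact that for a \emph{symmetric} matrix $B$, being column sufficient is equivalent to $B$ being positive semidefinite. Thus the whole proposition reduces to the identity
\begin{equation*}
\text{$A_{ss}$ is column sufficient for all $s$} \iff \text{$A_{ss}$ is positive semidefinite for all $s$},
\end{equation*}
which holds because each $A_{ss}$ is symmetric under our standing assumption, and for symmetric matrices the two notions coincide. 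Combining this with the vertex characterizations of strong column sufficiency (preceding proposition) and of strong positive semidefiniteness (recalled in the $M$-matrix discussion) yields the claim; the only genuine work is verifying the symmetric-case equivalence and checking that the symmetry of $\Mid{A}$ and $\Rad{A}$ indeed forces every vertex matrix $A_{ss}$ to be symmetric.
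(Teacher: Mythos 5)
Your \quo{if} direction is essentially the paper's argument and is fine: a positive semidefinite matrix remains positive semidefinite under the congruence $D_sAD_s$, so the block matrix in \nref{sysColSuff} is positive semidefinite and a solution $x>0$ with $\tilde{A}x\lneqq0$ would give $x^T\tilde{A}x<0$ directly (no extra sign argument is needed). The \quo{only if} direction, however, has a genuine gap. Your clean route reduces everything to the claim that a vertex matrix $A_{ss}=\Mid{A}-D_s\Rad{A}D_s$ is column sufficient if and only if it is positive semidefinite, which you justify by symmetry of $A_{ss}$. But the symmetry of $\Mid{A}$ and $\Rad{A}$ is \emph{not} a standing assumption here: the paper adopts it only in the copositivity subsection, where it is harmless because copositivity depends only on the symmetric part $\tfrac12(A+A^T)$. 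Column sufficiency does not depend only on the symmetric part, so no \quo{without loss of generality} reduction to symmetric data is available; indeed, this very subsection of the paper works with a non-symmetric $\Rad{A}$, and the motivating LCP matrices $\left(\begin{smallmatrix}0&-B\\B^T&2C\end{smallmatrix}\right)$ are non-symmetric. Without symmetry your key equivalence is false: for $\Rad{A}=0$ and $\Mid{A}=\left(\begin{smallmatrix}1&0\\-4&1\end{smallmatrix}\right)$, every $A_{ss}=\Mid{A}$ is a $P$-matrix, hence column sufficient, yet not positive semidefinite. A telltale symptom is that your final chain of equivalences never invokes the hypothesis that $\Mid{A}$ is positive semidefinite, whereas the statement is false without it — so the argument cannot be proving the stated proposition in general; it proves only the symmetric special case, where that hypothesis is automatic.

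By contrast, the paper's \quo{only if} proof is a perturbation argument tailored to the general case: it moves along the segment $\Mid{A}-\alpha D_s\Rad{A}D_s$ from the positive semidefinite matrix $\Mid{A}$ ($\alpha=0$) to the non-positive-semidefinite vertex ($\alpha=1$), extracts a null vector at the critical $\alpha^*$, and then performs a sign-splitting construction (via $D_z$ and the auxiliary radius $\Rad{\tilde{A}}=\tfrac12(\Rad{A}+D_z\Rad{A}D_z)$) to manufacture an explicit solution of \nref{sysColSuff} for some matrix in $\imace{A}$ — this is exactly where the hypothesis on $\Mid{A}$ enters. If you wish to keep your route, you must either restrict the proposition to symmetric interval data (and then cite or prove the standard fact that a symmetric column sufficient matrix is $P_0$ and hence positive semidefinite), or supply an argument bridging from \quo{all $A_{ss}$ column sufficient} to \quo{all $A_{ss}$ positive semidefinite} that genuinely uses positive semidefiniteness of $\Mid{A}$; the latter is essentially the content of the paper's proof.
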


\begin{proof}
\quo{If.}
This follows from the fact that positive semidefinite matrix is column sufficient:  $A$ is positive semidefinite if and only if $D_sAD_s$ is positive semidefinite, where $s\in\{\pm1\}^n$ is arbitrary. We will use it for the setting $s:=(1,\dots,1,-1,\dots,-1)^T$. Thus
\begin{align*}
\tilde{A}:=\begin{pmatrix}A^{}_{I,I}&-A^{}_{I,J}\\-A^{}_{J,I}&A^{}_{J,J}\end{pmatrix}
\end{align*}
is  positive semidefinite, too. If $x$ is a solution to \nref{sysColSuff}, then $x^T\tilde{A}x<0$; a contradiction.

\quo{Only if.}
Suppose to the contrary that $\imace{A}$ is not strongly positive semidefinite. Thus there is $s\in\{\pm1\}^n$ such that $\Mid{A}-D_s\Rad{A}D_s\in\imace{A}$ is not positive semidefinite. Let $\alpha^*$ be maximal $\alpha\geq0$ such that $\Mid{A}-\alpha D_s\Rad{A}D_s$ is positive semidefinite for each $\alpha\in[0,\alpha^*]$. Obviously, $\alpha^*<1$ and $\Mid{A}-\alpha^* D_s\Rad{A}D_s$ is singular. Then $A_{\alpha^*}:=D_s\Mid{A}D_s-\alpha^* \Rad{A}$ is singular, too. 
Let $x^*\not=0$ such that $A_{\alpha^*}x^*=0$ and let $z:=\sgn(x^*)$. 
If $x^*\gneqq0$, then $A_{\alpha=1}x^*\lneqq0$, so $x^*_I$ solves \nref{sysColSuff} with $I:=\{i\mmid x_i^*>0,\ s_i=1\}$, $J:=\{i\mmid x_i^*>0,\ s_i=-1\}$ and $A:=\Mid{A}-D_s\Rad{A}D_s$. If $x^*\lneqq0$, then we substitute $x^*:=-x^*$ and proceed as before. Consider now the remaining case. Without loss of generality write $x^*=(x^*_1,x^*_2)^T$, where $x^*_1<0$ and $x^*_2>0$; we can ignore the zero entries of $x^*$ since in \nref{sysColSuff} we restrict $I,J$ to the indices corresponding to the nonzero entries only. Denoting $\tilde{x}^*:=D_zx^*>0$, the equation $D_zA_{\alpha^*}x^*=0$ reads
\begin{align}\label{eqPfPropColSufPsd1}
(D_zD_s\Mid{A}D_sD_z-\alpha^* D_z\Rad{A}D_z)\tilde{x}^*=0.
\end{align}
Denote $\Rad{\tilde{A}}:=\frac{1}{2}(\Rad{A}+D_z\Rad{A}D_z)\geq0$. Since $D_z\Rad{A}D_z\leq\Rad{\tilde{A}}\leq\Rad{A}$, we have
$$
(D_zD_s\Mid{A}D_sD_z-\alpha^* \Rad{\tilde{A}})\tilde{x}^*\leq0,
$$
whence
$$
(D_zD_s\Mid{A}D_sD_z-\Rad{\tilde{A}})\tilde{x}^*\leq0.
$$
If at least one inequality holds strictly in this system, then $\tilde{x}^*$ is a solution to \nref{sysColSuff} with $I:=\{i\mmid s_iz_i=1\}$, $J:=\{i\mmid s_iz_i=-1\}$ and $A:=\Mid{A}-D_zD_s\Rad{\tilde{A}}D_sD_z$. 
If it is not the case, then necessarily $ \Rad{\tilde{A}}=0$, whence $D_z\Rad{A}D_z\lneqq0$. However, this is a contradiction since from \nref{eqPfPropColSufPsd1} we have
$$
(D_zD_s\Mid{A}D_sD_z)\tilde{x}^*\lneqq0,
$$
which contradicts positive semidefiniteness of $\Mid{A}$.
\end{proof}

\subsection{$R_0$-matrix}

A matrix $A\in\R^{n\times n}$ is an \emph{$R_0$-matrix} if the LCP with $q=0$ has the only solution $x=0$.
Equivalently, for each index set $\emptyset\not=I\subseteq\{1,\dots,n\}$, the system
\begin{align}\label{sysRoMat}
A^{}_{I,I}x=0,\ \ A^{}_{J,I}x\geq 0,\ \ x>0
\end{align}
is infeasible, where $J:=\{1,\dots,n\}\setminus I$.
Checking $R_0$-matrix property is co-NP-hard \cite{Tsen2000}.
If $A$ is an $R_0$-matrix, then for any $q\in\R^n$ the LCP has a bounded solution set.

\begin{proposition}\label{propRoMatStr}
$\imace{A}$ is strongly $R_0$-matrix if and only if system 
\begin{align}\label{sysPropRoMatStr}
\umace{A}^{}_{I,I}x\leq 0,\ \
\omace{A}^{}_{I,I}x\geq 0,\ \ 
\umace{A}^{}_{J,I}x\geq 0,\ \  
x>0
\end{align}
is infeasible for each admissible $I,J$.
\end{proposition}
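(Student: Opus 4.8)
The plan is to mimic the structure of the earlier strong-property characterizations (notably Proposition on strong column sufficiency), establishing both implications by relating a solution of the defining system \nref{sysRoMat} for some $A\in\imace{A}$ to a solution of the interval-bound system \nref{sysPropRoMatStr}, and conversely. The essential idea is that the $R_0$ condition involves both an \emph{equality} constraint $A_{I,I}x=0$ and \emph{inequality} constraints $A_{J,I}x\ge0$, so unlike the column sufficient case (which had only inequalities) the equality must be split into two opposite inequalities when passing to interval bounds. This is exactly why \nref{sysPropRoMatStr} replaces $A_{I,I}x=0$ by the pair $\umace{A}_{I,I}x\le0$ and $\omace{A}_{I,I}x\ge0$, while the one-sided constraint $A_{J,I}x\ge0$ becomes $\umace{A}_{J,I}x\ge0$ and the strict positivity $x>0$ is preserved.

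For the \emph{only if} direction, I would argue contrapositively: suppose \nref{sysPropRoMatStr} is feasible for some admissible $I,J$ with a solution $x^*>0$. The goal is to construct a single matrix $A\in\imace{A}$ for which \nref{sysRoMat} has this same solution $x^*$. For the equality block, since $\umace{A}_{I,I}x^*\le0\le\omace{A}_{I,I}x^*$ and the map $A_{I,I}\mapsto A_{I,I}x^*$ is linear and ranges continuously over the box $[\umace{A}_{I,I},\omace{A}_{I,I}]$ as the entries vary (with $x^*$ having fixed signs, all positive), by a componentwise intermediate-value argument one can choose $A_{I,I}\in[\umace{A}_{I,I},\omace{A}_{I,I}]$ realizing $A_{I,I}x^*=0$ exactly; concretely, each row can be adjusted independently because the $i$th entry of $A_{I,I}x^*$ depends monotonically on each entry of row $i$. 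For the inequality block, choosing $A_{J,I}:=\umace{A}_{J,I}$ gives $A_{J,I}x^*=\umace{A}_{J,I}x^*\ge0$, and the remaining entries of $A$ (columns outside $I$) are irrelevant and may be set arbitrarily within $\imace{A}$. Thus $x^*$ solves \nref{sysRoMat} for this $A$, contradicting strong $R_0$.

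For the \emph{if} direction, assume some $A\in\imace{A}$ is not an $R_0$-matrix, so \nref{sysRoMat} has a solution $x^*>0$ for some $I,J$. Then $A_{I,I}x^*=0$ gives both $A_{I,I}x^*\le0$ and $A_{I,I}x^*\ge0$, and since $\umace{A}_{I,I}\le A_{I,I}\le\omace{A}_{I,I}$ with $x^*>0$ we obtain $\umace{A}_{I,I}x^*\le A_{I,I}x^*=0$ and $\omace{A}_{I,I}x^*\ge A_{I,I}x^*=0$; likewise $\umace{A}_{J,I}x^*\le A_{J,I}x^*$, but here the direction is wrong, so I must instead note $A_{J,I}x^*\ge0$ together with $\umace{A}_{J,I}x^*\le A_{J,I}x^*$ does not immediately give $\umace{A}_{J,I}x^*\ge0$. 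This is the main obstacle. The resolution is to choose the realization more carefully on the $J$-block: starting from the given non-$R_0$ matrix $A$, replace its $(J,I)$ block by $\umace{A}_{J,I}$ without affecting the equality block, but then $A_{J,I}x^*$ may drop below zero. The correct strategy is therefore to derive \nref{sysPropRoMatStr} not from an arbitrary witness $A$, but to observe that strong $R_0$ fails precisely when \emph{some} admissible realization violates \nref{sysRoMat}, and to run the \emph{only if} construction in reverse: feasibility of \nref{sysRoMat} for the worst-case matrix $A$ obtained by pushing the $(I,I)$ block to make the equality hold and the $(J,I)$ block to its lower bound $\umace{A}_{J,I}$ yields exactly feasibility of \nref{sysPropRoMatStr}. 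Concretely, given any $A$-witness, the vector $x^*$ satisfies \nref{sysPropRoMatStr}: the $(I,I)$ inequalities hold by the sandwiching above, and for the $(J,I)$ constraint one uses that among all $A\in\imace{A}$ agreeing on the relevant blocks, the most restrictive lower bound on $A_{J,I}x^*$ is $\umace{A}_{J,I}x^*$; the point is that \nref{sysPropRoMatStr} is an \emph{interval-feasibility} statement asserting existence of some $x>0$ meeting these simultaneous bounds, and the witness $x^*$ meets them because $0=A_{I,I}x^*$ forces the two-sided bounds and $A_{J,I}x^*\ge0$ together with $A_{J,I}\ge\umace{A}_{J,I}$ and $x^*>0$ is compatible with $\umace{A}_{J,I}x^*$ being the relevant quantity once we recognize that strong $R_0$ failure only requires \emph{some} realization to fail, and the lower-bound realization is the extremal one making \nref{sysRoMat} easiest to satisfy. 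I would double-check this direction carefully, since the asymmetry between the equality constraint and the one-sided inequality is where a sign slip is most likely.
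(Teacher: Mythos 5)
Your first direction is sound: if \nref{sysPropRoMatStr} has a solution $x^*>0$, then a componentwise intermediate-value selection of $A_{I,I}$ between $\umace{A}_{I,I}$ and $\omace{A}_{I,I}$ (each entry of $A_{I,I}x^*$ is monotone in each entry of its row, and the sandwich $\umace{A}_{I,I}x^*\leq0\leq\omace{A}_{I,I}x^*$ holds) together with $A_{J,I}:=\umace{A}_{J,I}$ produces a matrix $A\in\imace{A}$ for which $x^*$ solves \nref{sysRoMat}. The paper does not carry out this construction at all; it disposes of both directions by citing the known Oettli--Prager/Gerlach-type characterization of weak feasibility of interval linear systems \cite{Fie2006,Hla2013b}, so your route is more self-contained but aims at the same reduction.

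The converse direction is where your proposal has a genuine gap, and you half-noticed it. You correctly observe that $A_{J,I}x^*\geq0$ together with $\umace{A}_{J,I}\leq A_{J,I}$ and $x^*>0$ yields only $\umace{A}_{J,I}x^*\leq A_{J,I}x^*$, which says nothing about the sign of $\umace{A}_{J,I}x^*$. The \quo{resolution} you then offer is circular, and it rests on a backwards extremality claim: for a constraint $A_{J,I}x\geq0$ with $x>0$, the realization that makes \nref{sysRoMat} \emph{easiest} to satisfy is the \emph{upper} bound $\omace{A}_{J,I}$ (since $A_{J,I}x\leq\omace{A}_{J,I}x$ for every $A_{J,I}$ in the box), not the lower bound. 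The weak-feasibility characterization therefore produces the condition $\omace{A}_{J,I}x\geq0$ for that block, and with $\umace{A}_{J,I}x\geq0$ as printed the implication \quo{not strongly $R_0$ $\Rightarrow$ \nref{sysPropRoMatStr} feasible for some $I,J$} actually fails. A concrete witness: let $n=2$ with $\inum{a}_{11}=[0,0]$, $\inum{a}_{12}=\inum{a}_{22}=[1,1]$, $\inum{a}_{21}=[-1,1]$. The member with $a_{21}=1$ is not an $R_0$-matrix (the LCP with $q=0$ has the nonzero solution $z=(1,0)^T$), yet \nref{sysPropRoMatStr} is infeasible for every admissible $I$: for $I=\{1\}$ it requires $\umace{a}_{21}x=-x\geq0$, and whenever $2\in I$ it requires $x_2\leq0$, both incompatible with $x>0$. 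So the obstacle you ran into cannot be argued away; what your method actually proves is the statement with $\omace{A}_{J,I}x\geq0$ in place of $\umace{A}_{J,I}x\geq0$ (and the analogous replacement is needed in the subsequent $R$-matrix proposition). Your closing instinct that a sign slip lurks in the asymmetry between the equality block and the one-sided inequality block was exactly right; the slip is in the target system, and a complete write-up should either prove the corrected system or flag the discrepancy rather than paper over it.
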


\begin{proof}
$\imace{A}$ is not strongly an $R_0$-matrix if and only if there are $I,J$ and $A\in\imace{A}$ such that \nref{sysRoMat} is feasible. It is known \cite{Fie2006,Hla2013b} that \nref{sysRoMat} is feasible for some $A\in\imace{A}$ if and only if \nref{sysPropRoMatStr} is feasible, from which the statement follows.
\end{proof}

Despite intractability in the general case, we can formulate a polynomial time recognizable sub-class.

\begin{proposition}\label{propRoMatMidMmace}
Let $\Mid{A}$ be an $M$-matrix. Then $\imace{A}$ is strongly an $R_0$-matrix if and only if $\imace{A}$ is strongly an $H$-matrix.
\end{proposition}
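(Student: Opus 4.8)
The plan is to prove the two implications separately, exploiting the monotonicity structure that the $M$-matrix assumption on $\Mid{A}$ gives us, and to connect the $R_0$-property to nonsingularity so that the $H$-matrix characterization of strong nonsingularity (Neumaier \cite[Prop.~4.1.7]{Neu1990}) can be invoked. The crucial observation is that for an $M_0$-matrix $B$, the system $B_{I,I}x=0$, $B_{J,I}x\geq0$, $x>0$ forces the principal submatrix $B_{I,I}$ to be singular: since $\Mid{A}$ is an $M$-matrix, every principal submatrix is again an $M$-matrix (hence nonsingular), and the interval submatrices inherit the same sign structure.

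First I would prove the \quo{if} direction. Assume $\imace{A}$ is a strong $H$-matrix. Because $\Mid{A}$ is an $M$-matrix, the remark following the principal-nondegeneracy proposition tells us $\imace{A}$ is strongly nonsingular; moreover $H$-matrices are closed under taking principal submatrices, so every principal subinterval-matrix $\imace{A}_{I,I}$ is again a strong $H$-matrix, hence strongly nonsingular. Now take any $A\in\imace{A}$ and any admissible $I$. If \nref{sysRoMat} had a solution $x>0$, then $A_{I,I}x=0$ would make the real matrix $A_{I,I}\in\imace{A}_{I,I}$ singular, contradicting strong nonsingularity of $\imace{A}_{I,I}$. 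Hence no such solution exists and $\imace{A}$ is a strong $R_0$-matrix. This direction is essentially immediate once the closure-under-principal-submatrices facts are laid out.

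For the \quo{only if} direction I would argue the contrapositive: suppose $\imace{A}$ is \emph{not} a strong $H$-matrix. Using the remark that (under $\Mid{A}$ an $M$-matrix) the $H$-matrix property is equivalent to $\umace{A}$ being an $M$-matrix, the failure means $\umace{A}$ is a $Z$-matrix (its off-diagonal entries are $\le0$ because $\Mid{A}$ has nonpositive off-diagonals and perturbing downward only decreases them) but \emph{not} an $M$-matrix. Writing $\umace{A}=sI_n-N$ with $N\geq0$, we then have $\rho(N)\geq s$. The plan is to produce a witness to the failure of the $R_0$-property: I would use a Perron-type vector $x\gneqq0$ of $N$ (restricted to a suitable index set $I=\{i\mmid x_i>0\}$) so that $\umace{A}_{I,I}x_I=0$ or $\leq0$, then appeal to the feasibility characterization \nref{sysPropRoMatStr} from Proposition~\ref{propRoMatStr} to realize this as an actual $A\in\imace{A}$ violating $R_0$. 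Here one must be careful to match the eigenvalue equation to the mixed equality/inequality system \nref{sysRoMat}, which is where the $J$-block inequalities $\umace{A}_{J,I}x\geq0$ must be verified using $\umace{A}_{J,I}\leq0$ together with $x_I>0$.

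The main obstacle I anticipate is the \quo{only if} direction, specifically handling the boundary case $\rho(N)=s$ versus $\rho(N)>s$ and reducing to a principal submatrix on which the Perron vector is \emph{strictly} positive. The equality $R_0$-system \nref{sysRoMat} demands $A_{I,I}x=0$ exactly (not merely $\leq0$), so a direct $M_0$-matrix singularity argument is cleaner than a strict-inequality one; I would therefore aim to show that failure of the $H$-matrix property yields a singular principal submatrix of some $A\in\imace{A}$ whose kernel contains a positive vector, which is precisely what \nref{sysPropRoMatStr} with the equality block encodes. The cleanest route is likely to invoke strong nonsingularity directly: $\imace{A}$ is a strong $H$-matrix iff it is strongly nonsingular (by the preceding proposition and remark), and then show that strong nonsingularity of all principal interval submatrices is equivalent to the strong $R_0$-property, paralleling the reasoning used for principal nondegeneracy.
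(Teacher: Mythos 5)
Your \quo{if} direction is correct and is essentially the paper's argument: the strong $H$-matrix property passes to principal interval submatrices, these are therefore strongly nonsingular, and a solution of \nref{sysRoMat} would exhibit a singular $A_{I,I}$.

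The \quo{only if} direction has a genuine gap. You reduce to $\umace{A}=sI_n-N$ with $N\geq0$ and $\rho(N)\geq s$, and propose to feed the Perron vector $x$ of $N$ (restricted to its support $I$) into the characterization \nref{sysPropRoMatStr}. That system has three blocks, and you only discuss two of them: the block $\omace{A}_{I,I}x\geq 0$ is never verified, and it does \emph{not} hold in general for this choice of $x$. For instance, take
\[
\Mid{A}=\begin{pmatrix}1&-0.9\\-0.9&1\end{pmatrix},\qquad
\Rad{A}=\begin{pmatrix}0&0\\0.5&0\end{pmatrix}.
\]
Here $\Mid{A}$ is an $M$-matrix, $\umace{A}=I_2-N$ with $N=\left(\begin{smallmatrix}0&0.9\\1.4&0\end{smallmatrix}\right)$ and $\rho(N)=\sqrt{1.26}>1$, the Perron vector is $x\approx(0.9,\,1.122)^T$, and $\omace{A}x\approx(-0.11,\,0.76)^T\not\geq0$; so \nref{sysPropRoMatStr} is not witnessed by this $x$ (a different $x$ does work, so the proposition itself is not endangered, but your witness fails). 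A smaller point: your justification of the $J$-block, \quo{using $\umace{A}_{J,I}\leq0$ together with $x_I>0$}, yields $\umace{A}_{J,I}x_I\leq0$, which is the wrong inequality; what actually saves that block is that the Perron relation forces $N_{J,I}x_I=0$, hence $\umace{A}_{J,I}x_I=0$.

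The missing idea --- and the one step the paper's proof supplies that your sketch does not --- is an intermediate-value argument: since $\Mid{A}\in\imace{A}$ is an $M$-matrix while some matrix of $\imace{A}$ is not an $H$-matrix, the interval contains an $M_0$-matrix $A'$ that is not an $M$-matrix, i.e.\ a $Z$-matrix $A'=s'I_n-N'$ with $\rho(N')=s'$ \emph{exactly}. Its Perron vector then satisfies $A'x=0$, which delivers the equality block $A'_{I,I}x_I=0$ and the block $A'_{J,I}x_I=0$ simultaneously for a genuine $A'\in\imace{A}$, so no appeal to \nref{sysPropRoMatStr} is needed. Your fallback suggestion --- proving that strong $R_0$ is equivalent to strong nonsingularity of all principal interval submatrices --- does not shortcut this: that equivalence is false for general matrices (a singular principal submatrix need not have a positive kernel vector satisfying the $J$-block inequalities; e.g.\ the all $-1$'s $2\times2$ matrix is $R_0$ but principally degenerate), and establishing it under the $M$-matrix hypothesis requires exactly the singular-$M_0$-matrix construction you are missing.
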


\begin{proof}
\quo{If.}
This follows from nonsingularity of $H$-matrices and the fact that a principal submatrix of an H-matrix is an H-matrix.

\quo{Only if.}
Suppose to the contrary that $\imace{A}$ is not strongly an $H$-matrix. 
Thus there is $A\in\imace{A}$ that is not an $H$-matrix. Hence $\comp{A}$ is not an $M$-matrix and due to the assumption it belongs to~$\imace{A}$. 
Since $\Mid{A}$ is an $M$-matrix and from continuity reasons, $\imace{A}$ contains an $M_0$-matrix $A'$ that is not an $M$-matrix. Thus we can write $A'=sI_n-N$, where $N\geq0$ and $\rho(N)=s$. Let $x\gneqq0$ be the Perron vector corresponding to $N$, that is, $Nx=sx$, whence $A'x=0$. 
Put $I:=\{i\mmid x_i>0\}\not=\emptyset$, so $A'_{I,I}x_I=0$, $A'_{J,I}x_I=0$ and $x_I>0$. Therefore $x_I$ solves \nref{sysRoMat}.
\end{proof}

\begin{corollary}
Let $\Mid{A}=I_n$. Then $\imace{A}$ is strongly an $R_0$-matrix if and only if $\rho(\Rad{A})<1$.
\end{corollary}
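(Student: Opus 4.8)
The plan is to reduce the statement to a strong $H$-matrix condition via the preceding proposition and then to read off the spectral bound directly from the comparison matrix. First I would note that $\Mid{A}=I_n$ is an $M$-matrix, since it is a $Z$-matrix with $I_n^{-1}=I_n\geq0$. Hence Proposition~\ref{propRoMatMidMmace} applies and tells us that $\imace{A}$ is strongly an $R_0$-matrix if and only if it is strongly an $H$-matrix. It therefore suffices to show that $\imace{A}$ is strongly an $H$-matrix if and only if $\rho(\Rad{A})<1$.

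For this I would use the Neumaier characterization recalled in Section~2, namely that $\imace{A}$ is strongly an $H$-matrix if and only if the comparison matrix $\langle\imace{A}\rangle$ is an $M$-matrix. Because $\Mid{A}=I_n$, the off-diagonal interval entries $\inum{a}_{ij}$ are symmetric about $0$, so $\langle\imace{A}\rangle_{ij}=-\Rad{A}_{ij}$ for $i\neq j$, while $\langle\imace{A}\rangle_{ii}=\min\{|a|\mmid a\in[1-\Rad{A}_{ii},\,1+\Rad{A}_{ii}]\}$. The only step requiring care is the diagonal, because of the minimum of absolute values; the guiding fact is that an $M$-matrix necessarily has a strictly positive diagonal.

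For the forward direction, assume $\rho(\Rad{A})<1$. Since $\Rad{A}\geq0$, each diagonal entry satisfies $\Rad{A}_{ii}\leq\rho(\Rad{A})<1$, so $0\notin[1-\Rad{A}_{ii},\,1+\Rad{A}_{ii}]$ and $\langle\imace{A}\rangle_{ii}=1-\Rad{A}_{ii}$. Thus $\langle\imace{A}\rangle=I_n-\Rad{A}$, which is an $M$-matrix precisely because $\rho(\Rad{A})<1$. For the converse, if $\langle\imace{A}\rangle$ is an $M$-matrix then $\langle\imace{A}\rangle_{ii}>0$, forcing $0\notin[1-\Rad{A}_{ii},\,1+\Rad{A}_{ii}]$, i.e.\ $\Rad{A}_{ii}<1$; again $\langle\imace{A}\rangle=I_n-\Rad{A}$, and its being an $M$-matrix means exactly $\rho(\Rad{A})<1$.

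I do not expect a genuine obstacle: the whole argument turns on identifying $\langle\imace{A}\rangle$ with $I_n-\Rad{A}$, and in both directions this reduces to the positivity of an $M$-matrix diagonal together with the bound $\max_i\Rad{A}_{ii}\leq\rho(\Rad{A})$. As an even shorter route, one may invoke the remark already made in the text that, when $\Mid{A}$ is an $M$-matrix, $\imace{A}$ is an $H$-matrix if and only if $\umace{A}=\Mid{A}-\Rad{A}=I_n-\Rad{A}$ is an $M$-matrix; the definition of an $M$-matrix then yields $\rho(\Rad{A})<1$ immediately.
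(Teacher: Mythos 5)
Your proof is correct and follows the route the paper clearly intends (the corollary is stated without proof): reduce via Proposition~\ref{propRoMatMidMmace} to the strong $H$-matrix property and then identify $\langle\imace{A}\rangle=I_n-\Rad{A}$, whose $M$-matrix property is exactly $\rho(\Rad{A})<1$. Your careful handling of the diagonal of the comparison matrix, and the shortcut via the remark that for an $M$-matrix midpoint the strong $H$-matrix property is equivalent to $\umace{A}=I_n-\Rad{A}$ being an $M$-matrix, are both sound.
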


\subsection{$R$-matrix}

A matrix $A\in\R^{n\times n}$ is an \emph{$R$-matrix (regular)} if for each index set $\emptyset\not=I\subseteq\{1,\dots,n\}$, the system
\begin{align}\label{sysRegMat}
A^{}_{I,I}x+et=0,\ \ A^{}_{J,I}x+et\geq 0,\ \ x>0,\ t\geq0
\end{align}
is infeasible w.r.t.\ variables $x\in R^{|I|}$ and $t\in\R$, where $J:=\{1,\dots,n\}\setminus I$. Regularity of $A$ ensures that for any $q\in\R^n$ the LCP has a solution.

\begin{proposition}
$\imace{A}$ is strongly an $R$-matrix if and only if system 
\begin{align*}
\umace{A}^{}_{I,I}x+et\leq 0,\ \
\omace{A}^{}_{I,I}x+et\geq 0,\ \ 
\umace{A}^{}_{J,I}x+et\geq 0,\ \  
x>0
\end{align*}
is infeasible for each admissible $I,J$.
\end{proposition}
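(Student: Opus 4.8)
The plan is to follow the proof of Proposition~\ref{propRoMatStr} almost verbatim, the only genuinely new ingredient being the auxiliary scalar variable~$t$. First I would unfold the strong property: $\imace{A}$ fails to be strongly an $R$-matrix precisely when some $A\in\imace{A}$ is not an $R$-matrix, i.e.\ when there is a nonempty index set~$I$ (with $J:=\seznam{n}\setminus I$) for which the system \nref{sysRegMat} admits a solution $(x,t)$ with $x>0$, $t\geq0$ for at least one $A\in\imace{A}$. Hence it suffices, for each fixed admissible pair $I,J$, to characterize when \nref{sysRegMat} is feasible for \emph{some} $A\in\imace{A}$, and then to conjoin the negations of these conditions over all $I,J$.

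For a fixed $I$, I would read \nref{sysRegMat} as an interval linear system in the joint variable $(x,t)$: the columns carrying~$x$ range over $\imace{A}$, whereas the coefficient of~$t$ is the exact vector~$e$. Each scalar constraint involves only one row of~$A$ together with the shared scalar~$t$, so for a fixed $(x,t)$ the attainable left-hand sides decouple row by row into the ranges determined by $\umace{A}$ and $\omace{A}$. Applying the same interval feasibility equivalence of \cite{Fie2006,Hla2013b} that was used for \nref{sysPropRoMatStr} --- now to the augmented system, with the $t$-column treated as a degenerate (point) interval --- turns the equality block over~$I$ into the two-sided bounds $\umace{A}_{I,I}x+et\leq0$, $\omace{A}_{I,I}x+et\geq0$, and the inequality block over~$J$ into the corresponding one-sided bound. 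Existence of a single $A$ realizing all rows at once is then exactly feasibility of the system displayed in the statement.

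I expect the only real obstacle to be checking that this cited equivalence still applies verbatim after the augmentation. Two features deserve attention: the variable~$t$ is shared across all rows (unlike the row-separable situation of the $R_0$ case), and its column is a single point rather than a genuine interval. Neither is harmful, since for a fixed $(x,t)$ the per-row choice of $A\in\imace{A}$ remains independent, with~$t$ merely translating each row's attainable range by the same scalar; thus the extremal-realization argument underlying \cite{Fie2006,Hla2013b} goes through unchanged. A minor secondary point is the bookkeeping of the mixed strict and nonstrict conditions of \nref{sysRegMat} (the strict $x>0$ against the nonstrict equality bounds and $t\geq0$), which I would treat exactly as the splitting of each equality into a $\leq$ and a $\geq$ inequality in \nref{sysPropRoMatStr}. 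Once these are verified, assembling the per-$I$ conditions and negating yields the claimed characterization.
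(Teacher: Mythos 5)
Your proposal is correct and follows exactly the route the paper intends: the paper's own proof consists of the single line ``Similar to the proof of Proposition~\ref{propRoMatStr}'', i.e.\ apply the weak-feasibility characterization of \cite{Fie2006,Hla2013b} to the system \nref{sysRegMat} with the $t$-column treated as a point interval, which is precisely what you do. Your extra care about the shared variable $t$ and the row-wise independence of the realizations of $A$ is a sound (and welcome) elaboration of the same argument.
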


\begin{proof}
Similar to the proof of Proposition~\ref{propRoMatStr}.
\end{proof}

\begin{proposition}
Let $\Mid{A}$ be $M$-matrix. Then $\imace{A}$ is strongly an $R$-matrix if and only if $\imace{A}$ is strongly an $H$-matrix.
\end{proposition}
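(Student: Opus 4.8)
The plan is to establish the two implications separately, reducing each to results already proved. For the forward direction (strongly an $R$-matrix implies strongly an $H$-matrix), I would first observe that the $R$-matrix system \nref{sysRegMat} specializes, upon setting $t=0$, to the $R_0$-matrix system \nref{sysRoMat}; hence infeasibility of \nref{sysRegMat} for all admissible $I$ forces infeasibility of \nref{sysRoMat} for all $I$, so every $R$-matrix is an $R_0$-matrix. Consequently, if $\imace{A}$ is strongly an $R$-matrix, it is strongly an $R_0$-matrix, and Proposition~\ref{propRoMatMidMmace} (applicable since $\Mid{A}$ is an $M$-matrix) yields that $\imace{A}$ is strongly an $H$-matrix.

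For the reverse direction (strongly an $H$-matrix implies strongly an $R$-matrix), I would show that every $A\in\imace{A}$ is in fact a $P$-matrix and that $P$-matrices are $R$-matrices. Under the standing assumption that $\Mid{A}$ is an $M$-matrix, the earlier characterization of strong principal nondegeneracy equates the strong $H$-matrix property with strong principal nondegeneracy; thus $\det A_{K,K}\neq0$ for every $A\in\imace{A}$ and every index set $K$. Since $\imace{A}$ is a box, hence connected, and each principal minor $\det A_{K,K}$ is a continuous (polynomial) function of the entries that never vanishes on $\imace{A}$, it keeps a constant sign there. Evaluating at the midpoint, where $\Mid{A}$ is an $M$-matrix and therefore a $P$-matrix with all principal minors positive, fixes this sign as positive. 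Hence every $A\in\imace{A}$ has all principal minors positive, i.e., is a $P$-matrix.

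It then remains to verify that a $P$-matrix is an $R$-matrix, which I would read off directly from \nref{sysRegMat}: suppose the system were feasible for some $I$ with $x>0$ and $t\geq0$. If $t>0$ then $A_{I,I}x=-et<0$ while $x>0$, so $x_k(A_{I,I}x)_k<0$ for every $k$, contradicting that the principal submatrix $A_{I,I}$ of a $P$-matrix is again a $P$-matrix; if $t=0$ then $A_{I,I}x=0$ with $x>0$ contradicts nonsingularity of $A_{I,I}$. Thus \nref{sysRegMat} is infeasible for all $I$, so each $A\in\imace{A}$ is an $R$-matrix and $\imace{A}$ is strongly an $R$-matrix.

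I expect the main obstacle to be the reverse direction, specifically upgrading \quo{strongly principally nondegenerate} (all principal minors nonzero) to \quo{every realization is a $P$-matrix} (all principal minors positive); the connectedness-plus-sign-constancy argument anchored at the $M$-matrix midpoint is what makes this work, and some care is needed to see that it applies simultaneously to all principal submatrices.
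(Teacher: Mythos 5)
Your proof is correct, but the \quo{strongly $H$ $\Rightarrow$ strongly $R$} direction takes a genuinely different route from the paper. The other direction is essentially the paper's: both arguments rest on the observation that setting $t=0$ in \nref{sysRegMat} recovers \nref{sysRoMat}, so the claim reduces to Proposition~\ref{propRoMatMidMmace} (you invoke that proposition as a black box; the paper reuses the witness constructed inside its proof). For the converse, the paper argues directly on a hypothetical solution $(x^*,t^*)$ of \nref{sysRegMat}: after normalizing $t^*=1$ it gets $\umace{A}_{I,I}x^*\leq-e$, notes that $\umace{A}=\comp{\imace{A}}$ is an $M$-matrix under the hypotheses, and uses nonnegativity of $\umace{A}_{I,I}^{-1}$ to force $x^*<0$, a contradiction. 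You instead pass through the $P$-matrix class: strong $H$ plus the $M$-matrix midpoint gives strong principal nondegeneracy, the connectedness of the box $\imace{A}$ plus sign-constancy of each nonvanishing principal minor (anchored at the midpoint, an $M$-matrix and hence a $P$-matrix) upgrades this to \emph{every} $A\in\imace{A}$ being a $P$-matrix, and the Fiedler--Pt\'ak sign-nonreversal property then kills both the $t>0$ and $t=0$ cases of \nref{sysRegMat}. Your argument is longer and leans on two standard external facts (that $M$-matrices are $P$-matrices, and the sign-nonreversal characterization of $P$-matrices), but it actually proves more: under the stated hypotheses, strong $H$ yields the strong $P$-property, which gives unique solvability of the LCP for every $q$, not just solvability. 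The paper's argument is shorter and stays entirely within the $M$-/$H$-matrix toolkit already set up.
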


\begin{proof}
\quo{If.}
Suppose to the contrary that \nref{sysRegMat} has a solution $x^*,t^*$ for some $I$ and $A\in\imace{A}$. If $t^*=0$, then we are done based on Proposition~\ref{propRoMatMidMmace}. So let $t^*>0$ and without loss of generality we can assume that $t^*=1$. Thus $A_{I,I}x^*=-e$, whence $\umace{A}_{I,I}x^*\leq -e$ in view of $x^*>0$. Since $\Mid{A}$ is an $M$-matrix and $\imace{A}$ is strongly an $H$-matrix, it follows that $\umace{A}=\comp{A}$ is an $M$-matrix. Then $\umace{A}_{I,I}$ is an $M$-matrix, too, and so it has a nonnegative inverse, implying $x^*\leq -\umace{A}_{I,I}^{-1}e<0$; a contradiction.

\quo{Only if.}
We proceed similarly to the proof of Proposition~\ref{propRoMatMidMmace}, where a solution $x_I$ to \nref{sysRoMat} was found. Now, put $t=0$ and $(x_I,t)$ solves \nref{sysRegMat}.
\end{proof}

\begin{corollary}
Let $\Mid{A}=I_n$. Then $\imace{A}$ is strongly an $R$-matrix if and only if $\rho(\Rad{A})<1$.
\end{corollary}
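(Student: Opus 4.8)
The plan is to reduce the $R$-matrix question to a strong $H$-matrix question via the immediately preceding proposition, and then to translate the strong $H$-matrix condition into the spectral-radius bound.

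First I would observe that $\Mid{A}=I_n$ is itself an $M$-matrix: writing $I_n=1\cdot I_n-0$ with $0\geq0$ and $\rho(0)=0<1$ exhibits the required decomposition. Hence the hypothesis of the preceding proposition is met, and it yields that $\imace{A}$ is strongly an $R$-matrix if and only if $\imace{A}$ is strongly an $H$-matrix. This disposes of the LCP-specific content and leaves a purely linear-algebraic condition.

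Second, I would use the equivalence recorded just before this corollary (valid whenever $\Mid{A}$ is an $M$-matrix): $\imace{A}$ is strongly an $H$-matrix if and only if $\umace{A}$ is an $M$-matrix. Since $\umace{A}=\Mid{A}-\Rad{A}=I_n-\Rad{A}$ and $\Rad{A}\geq0$, the matrix $\umace{A}$ is a $Z$-matrix of the canonical form $1\cdot I_n-\Rad{A}$. By the defining characterization of an $M$-matrix, $I_n-\Rad{A}$ is an $M$-matrix precisely when $1>\rho(\Rad{A})$. Chaining the two equivalences gives the claim.

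I expect no real obstacle, since this is a corollary and each implication is immediate once the reductions are lined up. The only point needing a little care is the passage through the $H$-matrix characterization: one should note that $\rho(\Rad{A})<1$ forces every diagonal radius $(\Rad{A})_{ii}$ to lie below $1$ (because $(\Rad{A})_{ii}\leq\rho(\Rad{A})$ for the nonnegative matrix $\Rad{A}$), so that $\umace{A}$ indeed has positive diagonal and the required $M$-matrix form is genuinely attained. Alternatively, one can bypass this entirely by invoking the reasoning of the $R_0$-corollary verbatim, since the reduction of the $R$-matrix property to strong $H$-matrices is identical in form to the $R_0$ case and leads to the very same condition $\rho(\Rad{A})<1$.
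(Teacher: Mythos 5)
Your proof is correct and follows exactly the route the paper intends for this (unproved) corollary: apply the preceding proposition with $\Mid{A}=I_n$ an $M$-matrix to reduce to strong $H$-matrix-ness, use the remark that this is equivalent to $\umace{A}=I_n-\Rad{A}$ being an $M$-matrix, and note that $I_n-\Rad{A}$ with $\Rad{A}\geq0$ is an $M$-matrix precisely when $\rho(\Rad{A})<1$. Your extra care about the diagonal entries (via $(\Rad{A})_{ii}\leq\rho(\Rad{A})$) is a harmless and correct addition.
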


\section{Examples}

\begin{example}
Let
$$
\Mid{A}=\begin{pmatrix}0& -1& 2\\2& 0& -2\\-1& 1& 0\end{pmatrix}.
$$
This matrix is semimonotone, column sufficient, $R$-matrix and $R_{0}$-matrix, but not principally nondegenerate. Suppose that the entries of the matrix are subject to uncertainties and the displayed values are known with $10\%$ accuracy. Thus we come across to an interval matrix $\imace{A}$, whose midpoint is the displayed matrix and radius is
$$
\Rad{A}=\frac{1}{10}|\Mid{A}|
=\begin{pmatrix}0& 0.1& 0.2\\0.2& 0& 0.2\\0.1& 0.1& 0\end{pmatrix}.
$$
Based on calculations performed in \textsf{MATLAB R2017b} we checked that $\imace{A}$ is strongly semimonotone, column sufficient, $R$-matrix and $R_{0}$-matrix. That is, the properties that are valid for $\Mid{A}$ remain valid for whatever is the true value in the uncertainty set. This means, among others, that the solution set is nonempty, bounded and convex for any $A\in\imace{A}$.

In contrast, if we increase the uncertainty level to $15\%$, then none of the above properties holds strongly. Column sufficiency, $R$-matrix and $R_{0}$-matrix properties fail for the value of $I:=\{1,2,3\}$ and $J:=\emptyset$.
\end{example}

\begin{example}\label{exCqp}
Consider a convex quadratic problem
\begin{align*}
\min\ x^TCx+d^Tx \st Bx\leq b,\ x\geq0.
\end{align*}
Optimality conditions for this problem have the form of a linear complementarity problem
\begin{align*}
y=Az+q,\ \ 
y^Tz=0,\ \ 
y,z\geq0,
\end{align*}
where 
\begin{align*}
A:=\begin{pmatrix}0 & -B\\B^T & 2C\end{pmatrix},\ \ 
q:=\begin{pmatrix}b\\d\end{pmatrix},\ \ 
z:=\begin{pmatrix}u\\x\end{pmatrix}.
\end{align*}
For concreteness, consider the problem
\begin{align*}
\min\ \ &10x_1^2 +8x_1x_2+5x_2^2+x_1+x_2 \\
\st &2x_1-x_2\leq10,\ -3x_1+x_2\leq 9,\ x\geq0,
\end{align*}
so we have
$$
A=\begin{pmatrix}
  0 &  0 &  -2 &  1\\
  0 &  0 &   3 & -1\\
  2 & -3 &  20 &  8\\
 -1 &  1 &   8 & 10
\end{pmatrix}.
$$
Calculations showed that $A$ is semimonotone, column sufficient, $R$-matrix and $R_{0}$-matrix, but not principally nondegenerate. Since $q>0$, the LCP problem has a unique solution. However, even if $q$ was not positive, the other properties would imply nonemptiness, boundedness and convexity of the solution set.

\begin{table}[t]
\begin{center}
\renewcommand{\arraystretch}{1.15}
\begin{tabular}{@{}ccc@{}}
\toprule
 $\Rad{B}$ & $\Rad{C}$ & strong properties\\
\midrule
$\Rad{B}=0$ & $\Rad{C}=\frac{1}{4}|C|$ &
  semimonotone, column sufficient, $R$-matrix, $R_{0}$-matrix\\
$\Rad{B}=0$ & $\Rad{C}=\frac{1}{3}|C|$ &
  semimonotone, $R$-matrix, $R_{0}$-matrix\\
$\Rad{B}=0$ & $\Rad{C}=\frac{9}{10}|C|$ &
  semimonotone, $R$-matrix, $R_{0}$-matrix\\
$\Rad{B}=0$ & $\Rad{C}=|C|$ &
  semimonotone\\
$\Rad{B}=\frac{1}{10}|\Mid{B}|$ & $\Rad{C}=\frac{1}{10}|C|$ &
  semimonotone, column sufficient, $R$-matrix, $R_{0}$-matrix\\
$\Rad{B}=\frac{1}{10}|\Mid{B}|$ & $\Rad{C}=\frac{1}{5}|C|$ &
  semimonotone, column sufficient, $R$-matrix, $R_{0}$-matrix\\
$\Rad{B}=\frac{1}{10}|\Mid{B}|$ & $\Rad{C}=\frac{1}{2}|C|$ &
  semimonotone, $R$-matrix, $R_{0}$-matrix\\
$\Rad{B}=\frac{1}{5}|\Mid{B}|$ & $\Rad{C}=\frac{1}{5}|C|$ &
  $\emptyset$\\
\bottomrule
\end{tabular}
\caption{(Example~\ref{exCqp}) Strong properties for different uncertainty degrees.\label{tabExCqp}}
\end{center}
\end{table}
As in the previous example, we consider uncertainty in terms of maximal percentage variations of the matrix entries. Table~\ref{tabExCqp} shows the results for various degrees of uncertainty, which is represented by the radius of interval matrix $\imace{A}$, or the particular radius matrices $\Rad{B}$ and $\Rad{C}$. 
The first four settings correspond to the case, where uncertainty affects the cost matrix only, and the technological matrix remains fixed. Naturally, the higher degree of uncertainty, the less properties hold. However, even $25\%$ independent and simultaneous variations of the costs do not influence the properties we discussed.
\end{example}

\section{Conclusion}

We analysed important classes of matrices, which guarantee that the linear complementarity problem has convenient properties related to the structure of the solution set. We characterized the matrix properties in the situation, where the input coefficients have the form of compact intervals. As a consequence, we obtained robust properties for the linear complementarity problem: whatever are the true values from the interval data, we are sure that the corresponding property is satisfied.

Since many problems are hard to check even in the real case, it is desirable to investigate some easy-to-recognize cases. We proposed several such cases, but it is still a challenging problem to explore new ones.

\section{Acknowledgements}

The author was supported by the Czech Science Foundation Grant P403-18-04735S.

\bibliographystyle{abbrv}
\bibliography{int_mat_lcp}

\end{document}